\begin{document}

\allowdisplaybreaks


\title{Upper Bounds on the Sizes of Finite Orbits for Unramified Morphisms}
\author[Young Kyun Kim]{Young Kyun Kim}

\address{Department of Mathematical Sciences, Seoul National University, Gwanak-ro 1, Gwanak-gu, Seoul 08826, Republic of Korea}
\email{young98kim@snu.ac.kr}
\subjclass[2020]{Primary: 37P55; Secondary:  37P05, 37P20, 37P15 }
\keywords{algebraic dynamics, finite orbits, unramified morphism}





\newtheorem{theorem}{Theorem}
\newtheorem{lemma}[theorem]{Lemma}
\newtheorem{conjecture}[theorem]{Conjecture}
\newtheorem{proposition}[theorem]{Proposition}
\newtheorem{corollary}[theorem]{Corollary}
\newtheorem{claim}[theorem]{Claim}

\newtheorem{definition}[theorem]{Definition}
\newtheorem{example}[theorem]{Example}
\newtheorem{remark}[theorem]{Remark}
\newtheorem{question}[theorem]{Question}

\theoremstyle{remark}
\newtheorem*{acknowledgement}{Acknowledgements}


\newenvironment{notation}[0]{%
  \begin{list}%
    {}%
    {\setlength{\itemindent}{0pt}
     \setlength{\labelwidth}{4\parindent}
     \setlength{\labelsep}{\parindent}
     \setlength{\leftmargin}{5\parindent}
     \setlength{\itemsep}{0pt}
     }%
   }%
  {\end{list}}

\newenvironment{parts}[0]{%
  \begin{list}{}%
    {\setlength{\itemindent}{0pt}
     \setlength{\labelwidth}{1.5\parindent}
     \setlength{\labelsep}{.5\parindent}
     \setlength{\leftmargin}{2\parindent}
     \setlength{\itemsep}{0pt}
     }%
   }%
  {\end{list}}
\newcommand{\Part}[1]{\item[\upshape#1]}

\def\Case#1#2{%
\paragraph{\textbf{\boldmath Case #1: #2.}}\hfil\break\ignorespaces}

\renewcommand{\a}{\alpha}
\renewcommand{\b}{\beta}
\newcommand{\g}{\gamma}
\renewcommand{\d}{\delta}
\newcommand{\e}{\epsilon}
\newcommand{\f}{\varphi}
\newcommand{\bfphi}{{\boldsymbol{\f}}}
\renewcommand{\l}{\lambda}
\renewcommand{\k}{\kappa}
\newcommand{\lhat}{\hat\lambda}
\newcommand{\m}{\mu}
\newcommand{\bfmu}{{\boldsymbol{\mu}}}
\renewcommand{\o}{\omega}
\renewcommand{\r}{\rho}
\newcommand{\rbar}{{\bar\rho}}
\newcommand{\s}{\sigma}
\newcommand{\sbar}{{\bar\sigma}}
\renewcommand{\t}{\tau}
\newcommand{\z}{\zeta}

\newcommand{\D}{\Delta}
\newcommand{\G}{\Gamma}
\newcommand{\F}{\Phi}
\renewcommand{\L}{\Lambda}

\newcommand{\ga}{{\mathfrak{a}}}
\newcommand{\gb}{{\mathfrak{b}}}
\newcommand{\gm}{{\mathfrak{m}}}
\newcommand{\gn}{{\mathfrak{n}}}
\newcommand{\gp}{{\mathfrak{p}}}
\newcommand{\gP}{{\mathfrak{P}}}
\newcommand{\gq}{{\mathfrak{q}}}

\newcommand{\Abar}{{\bar A}}
\newcommand{\Ebar}{{\bar E}}
\newcommand{\kbar}{{\bar k}}
\newcommand{\Kbar}{{\bar K}}
\newcommand{\Pbar}{{\bar P}}
\newcommand{\Sbar}{{\bar S}}
\newcommand{\Tbar}{{\bar T}}
\newcommand{\gbar}{{\bar\gamma}}
\newcommand{\lbar}{{\bar\lambda}}
\newcommand{\ybar}{{\bar y}}
\newcommand{\phibar}{{\bar\f}}

\newcommand{\Acal}{{\mathcal A}}
\newcommand{\Bcal}{{\mathcal B}}
\newcommand{\Ccal}{{\mathcal C}}
\newcommand{\Dcal}{{\mathcal D}}
\newcommand{\Ecal}{{\mathcal E}}
\newcommand{\Fcal}{{\mathcal F}}
\newcommand{\Gcal}{{\mathcal G}}
\newcommand{\Hcal}{{\mathcal H}}
\newcommand{\Ical}{{\mathcal I}}
\newcommand{\Jcal}{{\mathcal J}}
\newcommand{\Kcal}{{\mathcal K}}
\newcommand{\Lcal}{{\mathcal L}}
\newcommand{\Mcal}{{\mathcal M}}
\newcommand{\Ncal}{{\mathcal N}}
\newcommand{\Ocal}{{\mathcal O}}
\newcommand{\Pcal}{{\mathcal P}}
\newcommand{\Qcal}{{\mathcal Q}}
\newcommand{\Rcal}{{\mathcal R}}
\newcommand{\Scal}{{\mathcal S}}
\newcommand{\Tcal}{{\mathcal T}}
\newcommand{\Ucal}{{\mathcal U}}
\newcommand{\Vcal}{{\mathcal V}}
\newcommand{\Wcal}{{\mathcal W}}
\newcommand{\Xcal}{{\mathcal X}}
\newcommand{\Ycal}{{\mathcal Y}}
\newcommand{\Zcal}{{\mathcal Z}}

\renewcommand{\AA}{\mathbb{A}}
\newcommand{\BB}{\mathbb{B}}
\newcommand{\CC}{\mathbb{C}}
\newcommand{\FF}{\mathbb{F}}
\newcommand{\GG}{\mathbb{G}}
\newcommand{\NN}{\mathbb{N}}
\newcommand{\PP}{\mathbb{P}}
\newcommand{\QQ}{\mathbb{Q}}
\newcommand{\RR}{\mathbb{R}}
\newcommand{\ZZ}{\mathbb{Z}}

\newcommand{\bfa}{{\mathbf a}}
\newcommand{\bfb}{{\mathbf b}}
\newcommand{\bfc}{{\mathbf c}}
\newcommand{\bfd}{{\mathbf d}}
\newcommand{\bfe}{{\mathbf e}}
\newcommand{\bff}{{\mathbf f}}
\newcommand{\bfg}{{\mathbf g}}
\newcommand{\bfp}{{\mathbf p}}
\newcommand{\bfr}{{\mathbf r}}
\newcommand{\bfs}{{\mathbf s}}
\newcommand{\bft}{{\mathbf t}}
\newcommand{\bfu}{{\mathbf u}}
\newcommand{\bfv}{{\mathbf v}}
\newcommand{\bfw}{{\mathbf w}}
\newcommand{\bfx}{{\mathbf x}}
\newcommand{\bfy}{{\mathbf y}}
\newcommand{\bfz}{{\mathbf z}}
\newcommand{\bfA}{{\mathbf A}}
\newcommand{\bfF}{{\mathbf F}}
\newcommand{\bfB}{{\mathbf B}}
\newcommand{\bfD}{{\mathbf D}}
\newcommand{\bfG}{{\mathbf G}}
\newcommand{\bfI}{{\mathbf I}}
\newcommand{\bfM}{{\mathbf M}}
\newcommand{\bfzero}{{\boldsymbol{0}}}

\newcommand{\Aut}{\operatorname{Aut}}
\newcommand{\Base}{\mathcal{B}} 
\newcommand{\CM}{\operatorname{CM}}   
\newcommand{\codim}{\operatorname{codim}}
\newcommand{\diag}{\operatorname{diag}}
\newcommand{\Disc}{\operatorname{Disc}}
\newcommand{\Div}{\operatorname{Div}}
\newcommand{\Dom}{\operatorname{Dom}}
\newcommand{\Ell}{\operatorname{Ell}}   
\newcommand{\End}{\operatorname{End}}
\newcommand{\Fbar}{{\bar{F}}}
\newcommand{\Gal}{\operatorname{Gal}}
\newcommand{\GL}{\operatorname{GL}}
\newcommand{\Hom}{\operatorname{Hom}}
\newcommand{\hplus}{h^{\scriptscriptstyle+}}
\newcommand{\Index}{\operatorname{Index}}
\newcommand{\Image}{\operatorname{Image}}
\newcommand{\liftable}{{\textup{liftable}}}
\newcommand{\hhat}{{\hat h}}
\newcommand{\Ker}{{\operatorname{ker}}}
\newcommand{\Lift}{\operatorname{Lift}}
\newcommand{\Mat}{\operatorname{Mat}}
\newcommand{\MOD}[1]{~(\textup{mod}~#1)}
\newcommand{\Mor}{\operatorname{Mor}}
\newcommand{\Moduli}{\mathcal{M}}
\newcommand{\Norm}{{\operatorname{\mathsf{N}}}}
\newcommand{\notdivide}{\nmid}
\newcommand{\normalsubgroup}{\triangleleft}
\newcommand{\NotDom}{\operatorname{NotDom}}
\newcommand{\NS}{\operatorname{NS}}
\newcommand{\odd}{{\operatorname{odd}}}
\newcommand{\onto}{\twoheadrightarrow}
\newcommand{\ord}{\operatorname{ord}}
\newcommand{\Orbit}{\mathcal{O}}
\newcommand{\Per}{\operatorname{Per}}
\newcommand{\Perp}{\operatorname{Perp}}
\newcommand{\PrePer}{\operatorname{PrePer}}
\newcommand{\PGL}{\operatorname{PGL}}
\newcommand{\Pic}{\operatorname{Pic}}
\newcommand{\Prob}{\operatorname{Prob}}
\newcommand{\Qbar}{{\bar{\QQ}}}
\newcommand{\rank}{\operatorname{rank}}
\newcommand{\Rat}{\operatorname{Rat}}
\newcommand{\rel}{{\textup{rel}}}  
\newcommand{\Resultant}{\operatorname{Res}}
\renewcommand{\setminus}{\smallsetminus}
\newcommand{\shat}{{\hat s}}
\newcommand{\sing}{{\textup{sing}}} 
\newcommand{\sgn}{\operatorname{sgn}} 
\newcommand{\SL}{\operatorname{SL}}
\newcommand{\Span}{\operatorname{Span}}
\newcommand{\Spec}{\operatorname{Spec}}
\newcommand{\Support}{\operatorname{Supp}}
\newcommand{\That}{{\hat T}}  
\newcommand{\tors}{{\textup{tors}}}
\newcommand{\Trace}{\operatorname{Trace}}
\newcommand{\tr}{{\textup{tr}}} 
\newcommand{\UHP}{{\mathfrak{h}}}    
\newcommand{\<}{\langle}
\renewcommand{\>}{\rangle}

\newcommand{\ds}{\displaystyle}
\newcommand{\longhookrightarrow}{\lhook\joinrel\longrightarrow}
\newcommand{\longonto}{\relbar\joinrel\twoheadrightarrow}

\newcount\ccount \ccount=0
\def\cc{\global\advance\ccount by1{c_{\the\ccount}}}


\begin{abstract}
We prove that for a dynamical system on an algebraic variety over $\overline{\mathbb{Q}}$ generated by finitely many unramified endomorphisms, it is decidable whether a given point has a finite orbit.
This is achieved by establishing an effective upper bound for the size of finite orbits in integral algebraic dynamical systems with unramified morphisms.
\end{abstract}


\maketitle


\section{Introduction}
One of the central questions in arithmetic dynamics concerns bounding the sizes of preperiodic orbits (see \cite{Benedetto, Fakhruddin, Looper, Morton, Poonen, Silverman}).
In this paper, we obtain upper bounds for the sizes of preperiodic orbits and establish the decidability of preperiodic points in arithmetic dynamical systems generated by unramified endomorphisms.
We begin by introducing basic definitions in dynamical systems. 
Given a set $X$ with a collection of endomorphisms $S\subseteq \End(X)$, we shall call the pair $(X, S)$ a \emph{dynamical system}.
For $x\in X$, we shall say that $x$ is an \emph{$S$-finite} point if its forward $S$-orbit
\[
  \Orbit_S(x)=\{ f(x) : f\in \langle S \rangle \}
\]
is a finite set, where $\langle S \rangle$ is the monoid generated by $S$ under composition. Furthermore, if $S$ acts by permutation on $\Orbit_S(x)$, then $x$ is called an \emph{$S$-periodic} point.

In the case of a dynamical system $(X,f)$ with a single map $f$, a point $x\in X$ is $f$-periodic if and only if $f^n(x)=x$ for some $n\geq 1$.
Also, an $f$-finite point is equivalent to a preperiodic point in the usual sense, that is, $f^n(x)=f^{n+m}(x)$ for some $n\geq 0$ and $m>0$.
When considering a dynamical system with a single map, we shall use the term \emph{$f$-preperiodic} to refer to $f$-finite points (see Remark~\ref{remark:defofpreper}).

\subsection{Decidability of points with finite orbit}
A natural question is to determine the preperiodicity of a given point under a given self-map.
For systems of maps, Whang \cite{Whang} proved that the periodic points of algebraic varieties (reduced separated schemes of finite type over a field) over $\Qbar$ are decidable and posed the following question:
\begin{question}
\label{question:decide}
Given a finite set $S$ of endomorphisms of an algebraic variety $X/\Qbar$, is the subset of points with finite $S$-orbit decidable?
\end{question}
We provide an affirmative answer to Question~\ref{question:decide} for dynamical systems with unramified endomorphisms.
\begin{theorem}
\label{decidable}
Let $S$ be a finite set of unramified endomorphisms of an algebraic variety $X/\Qbar$.
Given $x\in X(\Qbar)$, it is decidable whether
or not $x$ is $S$-finite.
\end{theorem}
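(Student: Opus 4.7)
The plan is to derive Theorem~\ref{decidable} from the effective bound on sizes of finite orbits for unramified integral dynamical systems advertised in the abstract. The input $X/\Qbar$, $S \subset \End(X)$, and $x \in X(\Qbar)$ are all defined over some computable number field $K \subset \Qbar$, so I may work throughout over $K$. The first step is to spread out: choose a finite set $T$ of places of $K$ (including the archimedean ones) large enough that $X$ admits a model $\Xcal$ of finite type over $\Ocal_{K,T}$, each $f \in S$ extends to an $\Ocal_{K,T}$-endomorphism of $\Xcal$ that remains unramified, and $x$ extends to an $\Ocal_{K,T}$-valued point of $\Xcal$. This can be carried out effectively by inverting the finitely many primes appearing in the relevant discriminants, in the denominators of $x$, and in the support of the complement of the unramified locus.

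Granted this reduction, the paper's main upper bound produces a computable constant $B=B(\Xcal, S)$ such that every finite $S$-orbit on $\Xcal$ has cardinality at most $B$. In particular $|\Orbit_S(x)| \leq B$ whenever $x$ is $S$-finite. This yields the following halting algorithm. Set
\[
  \Omega_0 = \{x\}, \qquad \Omega_{n+1} = \Omega_n \cup \bigl\{ f(y) : f \in S,\ y \in \Omega_n \bigr\},
\]
and iterate, halting as soon as either $\Omega_{n+1}=\Omega_n$, in which case $\Omega_n = \Orbit_S(x)$ is finite, or $|\Omega_{n+1}| > B$, in which case $\Orbit_S(x)$ exceeds $B$ and hence must be infinite by the effective bound. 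Since the sequence $(|\Omega_n|)$ is strictly increasing until it stabilizes, this procedure terminates within at most $B+1$ steps; decidability of equality of points in $X(K)$, by comparing coordinates in a computable number field, makes each step effective.

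The main obstacle is of course the effective bound $B(\Xcal, S)$ itself, which is the real content of the paper and which I would establish by an independent argument (presumably using reduction modulo well-chosen primes and the unramifiedness hypothesis to control how orbits lift). Given that bound, everything above is a packaging step, but two subtleties are worth flagging: one must check that unramifiedness of each $f\in S$ can be preserved on an integral model after possibly enlarging $T$, so that the hypothesis of the effective bound genuinely holds; and one must ensure that $B$ is \emph{computable} from the spread-out data rather than merely known to exist.
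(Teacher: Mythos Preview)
Your proposal is correct and follows essentially the same approach as the paper: spread out $(X,S,x)$ to a separated scheme of finite presentation over a finitely presented $\ZZ$-algebra (your $\Ocal_{K,T}$) while keeping each $f$ unramified, invoke the effective bound of Theorem~\ref{theorem:manymap} for the integral model, and then iterate the orbit until it either stabilizes or exceeds the bound. The two subtleties you flag are exactly the ones the paper addresses via Lemma~\ref{lemma:spreading} (effective spreading-out of unramifiedness) and the explicit form of the constant in Theorem~\ref{theorem:manymap}.
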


One approach to solving Question~\ref{question:decide} involves obtaining effective upper bounds on the size of finite orbits.
Given a dynamical system $(X, S)$, suppose there exists an effective constant $C$ such that $|\Orbit_S(x)|\leq C$, for all $S$-finite points $x\in X$.
Consider the ascending chain of finite sets $O_1 \subseteq O_2 \subseteq \cdots \subseteq O_n \subseteq \cdots$ where
$O_1=\{x\}$ and $O_{n+1}=O_n \cup \{ f(y) : y\in O_n, \ f\in S \}$.
If $|O_{\lceil C \rceil}|\leq C$, then $x$ is $S$-finite, otherwise if $|O_{\lceil C \rceil}|>C$ then $x$ is not $S$-finite, where ${\lceil C \rceil}$ is smallest integer greater than or equal to $C$.
Thus, we can decide whether a given point $x\in X$ is $S$-finite point or not.

In line with the description above, Theorem~\ref{decidable} is accomplished by finding effective upper bounds for the finite orbits of an integral model of the given arithmetical dynamical system.
The proof of Theorem~\ref{decidable} will be presented in Section~\ref{section:decide}.

\subsection{Upper bounds on the size of finite orbits}

Before presenting our main results, we provide intuition for formally unramified morphisms by beginning with a simple application of one of our main theorems. The following is a particular case of Theorem~\ref{theorem:manymap}.

\begin{corollary}
\label{corollary:special case}
Let $n$ and $N$ be positive integers and let $p$ be a prime number such that $p\nmid N$.
Let $S$ be a finite set of polynomial maps
\[
f=(f_1, \ldots, f_n):\ZZ[1/N]^n\to\ZZ[1/N]^n
\]
where each $f_i$ is of the form
\[
f_i(x_1, \ldots, x_n)=x_i + p\cdot g_i(x_1, \ldots, x_n) + h_i(x_1^p, \ldots, x_n^p)
\]
for some $g_i, h_i \in\ZZ[1/N][x_1,\ldots,x_n]$.
Then there exists an effective constant $C=C(n,N)$ such that,
\[
|\Orbit_S(x)|\leq (C p^n |S| +1)^{C p^n -1}
\]
for all $S$-finite points $x\in(\ZZ[1/N])^n$.
\end{corollary}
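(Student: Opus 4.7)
The plan is to realize this corollary as an instance of Theorem~\ref{theorem:manymap} applied to the integral model $\AA^n_{\ZZ[1/N]} \to \Spec \ZZ[1/N]$. The prime $p$ is a maximal ideal of $\ZZ[1/N]$ because $p \nmid N$, and any $x \in \ZZ[1/N]^n$ is naturally a $\ZZ[1/N]$-valued point of $\AA^n_{\ZZ[1/N]}$, so its forward $S$-orbit lies inside the integral model and coincides with the orbit considered in the corollary.

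The one non-trivial verification is that the reduction of each $f \in S$ modulo $p$ is unramified on the special fiber $\AA^n_{\FF_p}$. I would do this by a direct Jacobian computation. Differentiating the stated form of $f_i$,
\[
\frac{\partial f_i}{\partial x_j} = \delta_{ij} + p \cdot \frac{\partial g_i}{\partial x_j} + p\, x_j^{p-1} \cdot \frac{\partial h_i}{\partial y_j}\bigl(x_1^p,\ldots,x_n^p\bigr),
\]
so the Jacobian matrix has the shape $I + p\,A(x)$ with $A(x) \in \Mat_n\bigl(\ZZ[1/N][x]\bigr)$. Reducing mod $p$, the Jacobian becomes the identity matrix, which is invertible, so $\bar f : \AA^n_{\FF_p} \to \AA^n_{\FF_p}$ is \'etale and a fortiori unramified.

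Theorem~\ref{theorem:manymap} then applies at the prime $p$. Its bound is driven by the size of the special fiber, which here is $|\AA^n(\FF_p)| = p^n$. Combining this count with the constant furnished by the main theorem for this particular integral model --- a constant that depends only on the model $\AA^n_{\ZZ[1/N]}$, and hence only on $n$ and $N$ --- yields a bound of the form $\bigl(C p^n |S| + 1\bigr)^{C p^n - 1}$ with $C = C(n,N)$, as required.

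The substantive difficulty lies in Theorem~\ref{theorem:manymap} itself, not in this corollary: the Jacobian computation is elementary, and everything else is a direct invocation. The one structural subtlety worth flagging is the role of the $p$-th power inside $h_i(x_1^p, \ldots, x_n^p)$: when one differentiates, the factor $p$ coming from $\partial x_j^p / \partial x_j = p\, x_j^{p-1}$ is exactly what forces every entry of the Jacobian correction to be divisible by $p$, and hence what makes the reduction \'etale.
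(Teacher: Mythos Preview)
Your proposal is correct and follows essentially the same approach as the paper: the corollary is stated there as ``a particular case of Theorem~\ref{theorem:manymap}'' (cf.\ also Example~\ref{example:affine unramified}), and your Jacobian computation showing that $Df \equiv I \pmod p$, hence $f_{\FF_p}$ unramified, together with the count $|\AA^n(\FF_p)|=p^n$, is exactly the verification needed to invoke that theorem with $X=\AA^n_{\ZZ[1/N]}$, $R=\ZZ[1/N]$, and $\gm=(p)$.
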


Note that for the maps Corollary~\ref{corollary:special case}, the Jacobian matrix reduces to the identity matrix modulo $p$. This is a special case of the general notion of an unramified morphism between schemes. We recall the definition of formally unramified morphisms.

\begin{definition}
Let $f:X\to Y$ be a scheme morphism.
For a point $x\in X$, we shall say $f$ is \emph{formally unramified at $x$}, if the stalk at $x$ of the relative sheaf of differentials is zero, that is $\Omega_{X/Y,x}=0$.
We say that $f$ is \emph{formally unramified} if the relative sheaf of differentials vanishes, that is $\Omega_{X/Y}=0$.
\end{definition}

In the case where $X$ and $Y$ are schemes of finite type over a field, $f$ being unramified at a point $x\in X$ implies that the differential map $df|_x : T_{X,x} \to T_{Y,y}\otimes_{\kappa(y)}\kappa(x)$ between the Zariski tangent spaces is injective. In the setting of Corollary~\ref{corollary:special case}, the reduction of $df$ modulo $p$ is the identity, so this injectivity holds.

\begin{definition}
Let $f:X\to X$ be a scheme endomorphism over a field $k$. Then we shall say $f$ has \emph{only unramified periodic $k$-points}, if $f$ is formally unramified at every periodic point $x\in X(k)$.
\end{definition}

\begin{theorem}
\label{theorem:singlemap}
Let $X$ be a separated scheme of finite presentation over a finitely presented domain $R$ of characteristic zero.
Let $\gm\leq R$ be a maximal ideal.
Let $f:X\to X$ be an endomorphism such that $f_{R/\gm}$ has only unramified periodic $R/\gm$-points.
There exists an effective constant $C=C(X,R)$ such that,
\[
|\Orbit_f(x)| < |X(R/\gm)|+C
\]
for all $f$-preperiodic $x\in X(R)$. Here $f_{R/\gm}$ is the base change of $f$ over $\Spec(R/\gm)$.
\end{theorem}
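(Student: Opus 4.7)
The plan is to compare the $R$-orbit of $x$ with its reduction modulo $\gm$ under the reduction map $\pi\colon X(R)\to X(R/\gm)$, and to use the unramified hypothesis at periodic points to bound the fibers of $\pi$. Since $R$ is a finitely generated $\ZZ$-algebra domain of characteristic zero, $R/\gm$ is a finite field, so $|X(R/\gm)|$ is finite and will serve as the main term. The image $\pi(\Orbit_f(x))$ is contained in $\Orbit_{\bar{f}}(\bar{x})\subseteq X(R/\gm)$ (where $\bar{f}=f_{R/\gm}$, $\bar{x}=\pi(x)$), so it suffices to bound the excess $|\Orbit_f(x)|-|\Orbit_{\bar{f}}(\bar{x})|$ by a constant $C=C(X,R)$ independent of $\gm$.

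First, I would observe that fibers of $\pi$ above points in the preperiodic tail of the reduced orbit contain exactly one $R$-iterate each: a reduced-tail point $\bar{y}$ appears only once in $\Orbit_{\bar{f}}(\bar{x})$, at some unique index $i_0$, and for $i\neq i_0$ one has $\bar{f}^{i}(\bar{x})\neq\bar{y}$ (since $i\geq\bar{j}$ lands in the reduced cycle, which does not contain $\bar{y}$). Hence $f^{i_0}(x)$ is the unique $R$-iterate reducing to $\bar{y}$, and the excess comes entirely from fibers above points of the reduced cycle. It then suffices to bound each such cycle-fiber uniformly in $\gm$.

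Second, fix a reduced-cycle point $\bar{y}$ of $\bar{f}$-period $\bar{p}$, and set $g=f^{\bar{p}}$. By hypothesis $\bar{f}$, and hence $\bar{g}$, is formally unramified at $\bar{y}$, and $\bar{g}(\bar{y})=\bar{y}$. Thus $\bar{g}^{*}$ is surjective on the cotangent space $\gm_{\bar{y}}/\gm_{\bar{y}}^{2}$; by Nakayama's lemma applied to the Noetherian completion $\widehat{\Ocal}_{X_{R/\gm},\bar{y}}$, it is surjective on the completion, and by Noetherianity it is then an automorphism. Lifting coefficients to $R_{\gm}$ via the finite presentation of $X/R$, $g$ induces a formal automorphism of the formal completion of $X$ at $\bar{y}$, expressible in local coordinates as a power-series automorphism of $R_{\gm}[[t_{1},\dots,t_{d}]]$ (with $d=\dim_{\bar{y}}X_{R/\gm}$) whose linear part is $A=d\bar{g}_{\bar{y}}\in\GL_{d}(R/\gm)$. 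The iterates of $x$ above $\bar{y}$ form a $g$-orbit of $R_{\gm}$-valued sections of this formal neighborhood, so bounding them reduces to bounding the number of $R_{\gm}$-valued periodic sections of a formal power-series automorphism over $R_{\gm}$ purely in terms of $R$ and $d$.

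The main obstacle is precisely this uniform bound: the residue characteristic $p=\mathrm{char}(R/\gm)$ varies with $\gm$, so the order of $A\in\GL_{d}(R/\gm)$ is not a priori bounded independently of $\gm$. Resolving this requires a rigidity argument exploiting that the $g$-periodic section $x$ exists as an $R$-integral point, so the admissible orders of $A$ are constrained by the arithmetic of $R$ itself (for example through finiteness of the torsion subgroup of $R^{\times}$ and of roots of unity compatible with the $R$-structure), combined with a formal normal-form or $p$-adic logarithm analysis — available because $\bar{g}$ is unramified — controlling the higher-order Taylor corrections of $g$. These two ingredients together should yield a per-fiber bound independent of $\gm$; summing over the at-most-$|X(R/\gm)|$ reduced-cycle points and adding the singleton contributions from the tail then gives the stated inequality $|\Orbit_f(x)|<|X(R/\gm)|+C$ with effective $C=C(X,R)$.
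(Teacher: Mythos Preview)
Your fiber decomposition is natural but does not produce the additive shape $|X(R/\gm)|+C$. If every fiber over a reduced-cycle point has size at most some $K$ independent of $\gm$, summing gives at best $\bar n + K\bar m$, where $\bar n,\bar m$ are the tail and period of $\Orbit_{\bar f}(\bar x)$; the excess over $|\Orbit_{\bar f}(\bar x)|$ is then $(K-1)\bar m$, which still depends on $\gm$. A per-fiber bound is therefore not enough: what you actually need is that the \emph{total} size of the union of all cycle-fibers is bounded by a constant. Equivalently, you need that every $R$-iterate whose reduction lies in the reduced cycle is already $f$-periodic, i.e.\ that the $R$-tail length $n$ equals the reduced tail length $\bar n$. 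Your argument never isolates or proves this; you also silently pass from ``the iterates above $\bar y$ form a $g$-orbit'' to ``periodic sections,'' which is exactly this unproved step.

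The paper's proof obtains $n\le |X(R/\gm)|$ directly. Its key lemma (Proposition~\ref{proposition:key_fixed}) says: if $y\in X(R)$ is eventually $h$-fixed and $\bar y$ is $\bar h$-fixed with $\bar h$ unramified at $\bar y$, then $y$ is already $h$-fixed (proved by uniqueness of infinitesimal lifts up the tower $R/\gm^r$ and injectivity of $X(R)\to X(\widehat R_\gm)$). Applying this with $h=f^{m(j-i)}$ whenever $\bar f^{\,i}(\bar x)=\bar f^{\,j}(\bar x)$ for $i<j$ forces $f^i(x)=f^{i+m(j-i)}(x)$, hence $i\ge n$ by minimality; thus $\{f^i(x):0\le i<n\}$ injects into $X(R/\gm)$ and $n\le |X(R/\gm)|$. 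The cycle length $m$ is then bounded by the effective constant $C(X,R)$ of Whang's uniform periodic-orbit theorem (Theorem~\ref{junhowhang}), quoted as a black box. Your ``main obstacle'' --- bounding the period of a formal automorphism uniformly in the residue characteristic --- is precisely the content of that theorem; the gestures toward torsion in $R^\times$ and $p$-adic logarithms do not constitute a proof, and in any case you are deploying the unramified hypothesis toward the wrong subproblem. The role of unramifiedness here is not to linearize the dynamics in a formal neighborhood, but to guarantee uniqueness of lifts of fixed points through nilpotent thickenings.
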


In certain cases, we obtain sharper bounds than the one given in Theorem~\ref{theorem:singlemap}; see Theorem~\ref{theorem:singlemap2} and Example~\ref{example:ZZ}. In proving Theorem~\ref{theorem:singlemap}, we bound both the preperiodic part and the periodic parts of the orbit.
In particular, for an $f$-preperiodic point $x\in X(R)$, let the smallest integers $n\geq 0$ and $m>0$ satisfying $f^n(x)=f^{n+m}(x)$.
Then $|\Orbit_f(x)|=n+m$, and we bound $n$ and $m$ by $|X(R/\gm)|$ and $C$, respectively.
The constant $C$ is the upper bound on the size of periodic orbits proven by Whang \cite{Whang}.

We now extend our analysis to dynamical systems with finitely many maps,
where we establish an upper bound on the size of finite orbits under systems of unramified endomorphisms:

\begin{theorem}
\label{theorem:manymap}
Let $X$ be a separated scheme of finite presentation over a finitely presented domain $R$ of characteristic zero.
Let $\gm\leq R$ be a maximal ideal.
There exists an effective constant $C=C(X,R)$ such that
for any finite subset
\[
S\subseteq \{f\in \End(X/R) : f_{R/\gm} \text{ is unramified}\},
\]
we have
\[
|\Orbit_S(x)|\leq (C|X(R/\gm)||S|+1)^{C|X(R/\gm)|-1}
\]
for all $S$-finite points $x\in X(R)$.
\end{theorem}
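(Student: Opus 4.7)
The plan is to reduce to the single-map case (Theorem~\ref{theorem:singlemap}) by applying it to each element of the monoid $\langle S\rangle$, and then to bound the total orbit size by a combinatorial counting argument on monoid words.

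I would first use that composition preserves formal unramifiedness: the first exact sequence for K\"ahler differentials shows that $\Omega_{X/X}=0$ for $g\circ f$ whenever each of $f,g$ is formally unramified. Thus for every word $g\in\langle S\rangle$, the reduction $g_{R/\gm}$ is unramified at every $R/\gm$-point, so Theorem~\ref{theorem:singlemap} applies. Since an $S$-finite point $x$ is automatically $g$-preperiodic for each $g\in\langle S\rangle$, this yields the uniform single-word bound $|\Orbit_g(x)|<|X(R/\gm)|+C_0$, where $C_0=C_0(X,R)$ is the effective constant produced by Theorem~\ref{theorem:singlemap}. Setting $M:=|X(R/\gm)|+C_0\leq C\cdot|X(R/\gm)|$ for a suitable $C=C(X,R)$ absorbing $C_0$, every iteration of a single element of $\langle S\rangle$ starting from $x$ visits at most $M$ distinct points of $\Orbit_S(x)$.

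The combinatorial heart of the proof is the claim that every $y\in\Orbit_S(x)$ admits a short representation $y=w(x)$ with $w\in\langle S\rangle$ of length at most $M-1$. Heuristically, given any word expression of $y$ of length $\ell\geq M$, a suitable contiguous subword -- viewed as a single composite endomorphism -- iterates through the corresponding intermediate orbit point at most $M$ times before looping, by the single-word bound. That loop can be excised to produce a strictly shorter word representing the same orbit point, and iterating the reduction eventually yields a word of length at most $M-1$. Counting such short words crudely, one encodes each as a sequence of length $M-1$ over the alphabet of ``vertex-generator'' pairs supplemented by a ``halt'' symbol, giving at most $(|S|\cdot M+1)^{M-1}$ possibilities, which is the desired bound on $|\Orbit_S(x)|$.

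The main obstacle will be making the word-reduction step precise. In the single-generator case the reduction is transparent (iterate and cut out the periodic loop), but in a monoid with several generators a long word need not contain a long single-generator sub-power, and reductions across different generators must be coordinated to terminate in a word of bounded length. I expect the most natural route is a Shirshov-type height bound for the finite monoid induced on $\Orbit_S(x)$ by the action of $\langle S\rangle$, with the uniform single-word bound $M$ playing the role of Shirshov's PI-hypothesis; alternatively, one might proceed by a direct induction on word length, peeling off single-generator blocks and re-applying Theorem~\ref{theorem:singlemap} at each stage to control how the orbit spreads under additional generators.
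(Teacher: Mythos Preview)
Your overall strategy---reduce to bounding the length of a shortest word representing each $y \in \Orbit_S(x)$, then count such short words---is sound, and once the word-length bound is in hand your counting is essentially the paper's (the paper stratifies $\Orbit_S(x)$ by a ``level'' function that plays the role of your word length and arrives at the same expression). The gap is in your justification of the word-length bound itself.

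Your heuristic says the single-word bound $|\Orbit_g(z)| \leq M$ for every $g \in \langle S \rangle$ lets you excise a ``loop'' from any long word. But the trajectory $x_0 = x,\ x_1 = f_{i_1}(x_0),\ \ldots,\ x_\ell = y$ through $w = f_{i_\ell}\cdots f_{i_1}$ is not the iteration of any single $g$: it applies each subword once and moves on, so the bound on $|\{z, g(z), g^2(z), \ldots\}|$ says nothing about it. If $w$ is already a shortest word for $y$, all the $x_k$ are distinct (otherwise excise the literal repetition), and from the \emph{statement} of Theorem~\ref{theorem:singlemap} alone there is no mechanism left to bound $\ell$. A Shirshov-type height argument or peeling off single-generator blocks will not close this, because the obstruction is arithmetic rather than combinatorial: you need to know \emph{where} in its $g$-orbit a given point sits, not just how large that orbit is.

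What the paper proves (Theorem~\ref{theorem:pathbound}) is precisely your missing claim: any such chain of distinct points has at most $C|X(R/\gm)|$ elements. The argument does not go through the orbit-size bound of Theorem~\ref{theorem:singlemap}; it uses the finer input of Proposition~\ref{proposition:key_fixed} (packaged as Proposition~\ref{prop:periodicequiv}). Pigeonhole on the reductions $\bar{x}_k \in X(R/\gm)$ forces $\bar{x}_i = \bar{x}_j$ for some $i<j$ once $\ell \geq |X(R/\gm)|$; the subword $g$ carrying $x_i$ to $x_j$ then fixes $\bar{x}_i$ modulo $\gm$, and Proposition~\ref{prop:periodicequiv} upgrades this to ``$x_i$ is $g$-\emph{periodic}'' (not merely $g$-preperiodic with bounded orbit)---exactly the information the orbit-size bound alone does not supply. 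From there each fiber of the reduction map on the chain is a single periodic orbit of size at most $C$ by Theorem~\ref{junhowhang}, giving $\ell + 1 \leq C|X(R/\gm)|$. If you replace your appeal to Theorem~\ref{theorem:singlemap} with this argument, your plan goes through and your word-counting recovers the stated bound.
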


For a proper scheme $X$ over a Dedekind domain $R$, we have $X(R)=X(F)$ where $F$ is the field of fractions of $R$. In this case, our theorems provides a bound on the orbit sizes of rational preperiodic points for unramfied morphisms.
The key to proving Theorem~\ref{theorem:singlemap} and Theorem~\ref{theorem:manymap} lies in Proposition~\ref{proposition:key_fixed}.
Our strategy is to infinitesimally lift the fixed points modulo $\gm$ to fixed points modulo $\gm^n$ for any $n$.
This can be done since formally unramified maps satisfy a unique infinitesimal lifting property(see Proposition~\ref{proposition:lifting}).
Specifically, if a map modulo $\gm$ is formally unramified at a fixed point, then the fixed points can be uniquely lifted to a first order thickening. 
Thus, repeated lifting of unramified fixed points modulo $\gm$ ensures that they remain fixed points modulo $\gm^n$ for any $n$, and consequently, they are fixed points in the completion of $R$ by $\gm$. 
Theorem~\ref{theorem:singlemap} will be proved in Section~\ref{section:bound1} and the proof of Theorem~\ref{theorem:manymap} will be presented in Section~\ref{section:manymap}.

In arithmetic dynamical systems, there has been considerable study on upper bounds for the size of periodic and preperiodic orbits.
Existing results on non-proper schemes primarily focus on bounding the size of periodic orbits.
For instance, Pezda \cite{PezdaZ2, Pezda3, PezdaZN} obtained upper bounds for the size of periodic orbits of a single polynomial endomorphism on $\ZZ^n$. 
Whang \cite{Whang} further proved a uniform upper bound on the size of periodic orbits for finitely presented separated schemes over a finitely presented ring of characteristic zero under any set of endomorphisms. For varieties over a discrete valuation ring, Fakhruddin \cite{Fakhruddin} showed the finiteness on the size of periodic orbits of a single endomorphism.

In the case of projective spaces, Northcott's groundbreaking result \cite{Northcott} proved the finiteness of preperiodic points of projective space endomorphisms of degree greater than $1$ over a number field. Subsequently, Morton and Silverman \cite{Morton} conjectured the existence of a uniform upper bound on the number of preperiodic points for endomorphisms of projective space over number fields.
This conjecture, known as the Morton-Silverman Uniform Boundedness Conjecture, has been the subject of extensive study; see \cite{Benedetto, Fakhruddin, Looper, Morton, Poonen, Silverman}.

In this context, a natural question arises: does there exist an upper bound on the size of preperiodic orbits of non-proper schemes, such as the affine space?
\begin{question}
Given a polynomial map $f:\AA^N \to\AA^N$, does there exist a constant $C=C(f)$ such that
\[
  |\Orbit_S(x)|<C
\]
for all $f$-preperiodic points $x$?
\end{question}
However, the following example from \cite{Bell} shows that such upper bound does not exist.
\begin{example}
\label{example:counter}
Let $\{F_n\}$ be the Fibonacci sequence with $F_0=0$, $F_1=1$.
Consider the following map
\begin{align*}
    f:\AA^2 &\to \AA^2\\
    (x,y)&\mapsto (xy, x^2+xy).
\end{align*}
Let $a_n=(F_n, -F_{n-1})$. Then $f(a_n)=(-F_nF_{n-1}, F_nF_{n-2})=-F_na_{n-1}$
and $f^2(a_1)=(0,0).$ Hence
\[
  |\Orbit_f(a_n)|=n+2.
\]
Also, note that $f^{-1}(a_n)\cap\AA^2(\ZZ)=\emptyset$.
\end{example}

The Jacobian determinant of the map given in Example~\ref{example:counter} is zero at the fixed point $(0,0)$, demonstrating that our condition of unramifiedness is necessary. 

\begin{remark}
\label{remark:defofpreper}
Let $(X,f)$ be dynamical system with a single map and let $\Orbit_f(x)$ be a finite orbit. For some $n\geq0$, $f^n(\Orbit_f(x))$ is a periodic orbit. However, in a dynamical system with more than one map, an $S$-finite orbit $\Orbit_S(x)$ may not contain any $S$-periodic orbits. For example, consider the dynamical system $(\ZZ, S)$ with $S=\{x^2-3x, x^2-3x+3\}$. In this case, no subset of $\Orbit_S(0)=\{0,3\}$ is $S$-periodic. This explains our use of the term \emph{$S$-finite} rather than ``pre-periodic" for points with finite orbits in general multi-map dynamical systems.
\end{remark}

\subsection{Acknowledgements}
I would like to thank Junho Peter Whang for valuable discussions and comments. I also thank Jae-Hwan Choi for help with editing this paper. This work was supported by the Samsung Science and Technology Foundation under Project Number SSTF-BA2201-03.

\section{Notations and Preliminaries}
\label{section:preliminary}

In this section we review basic notation and recall important properties of formally unramified morphisms.
For a dynamical system $(X,S)$ and a self-map $f\in S$, we will set the following notations.
\begin{notation}
\item[$f^n$]
the $n$th iteration of $f$, i.e. $f^n=\underbrace{{f\circ \cdots \circ f}}_\text{n times}$
\item[$\Per(X, S)$]
the subset of $X$ consisting of all $S$-periodic points.
\item[$\mathrm{Fin}(X,S)$]
the subset of $X$ consisting of all $S$-finite points.
\end{notation}
For a scheme morphism $f:X\to X$ over a ring $R$ and ring homomorphism $R\to T$, we will set the following notations.
\begin{notation}
\item[$f_{T}$]
the base change of $f$ over $\Spec(T)$.
\item[$\Omega_f$]
the relative sheaf of differentials of $f$.
\item[$X^f$]
the fixed locus of $f$.
\end{notation}

\begin{definition}
Given a dynamical system $(X, S)$. 
\begin{parts}
\Part{(a)} For $x\in X$, we say $x$ is \emph{$S$-fixed point}, if the action
of $S$ on $x$ is trivial. 
\Part{(b)} Let $f\in S$. We say $x$ is \emph{eventually $f$-fixed point},
if $f^n(x)=f^{n+1}(x)$, for some $n\geq 0$.
\end{parts}
\end{definition}

\begin{definition}
Let $f:X\to X$ be an endomorphism of a scheme over a field $k$. We shall say $f$ \emph{has only unramified fixed $k$-points}, if $f$ is formally unramified at every fixed point $x\in X^f(k)$.
\end{definition}

\begin{definition}
A morphism of schemes $f:X\to Y$ is called \emph{unramified}, if $f$ is locally of finite type and formally unramified.
\end{definition}

We recall the infinitesimal lifting property of formally unramified morphisms.
\begin{proposition}
\label{proposition:lifting}
Let $f:X\to Y$ be a morphism of schemes. For $x\in X$, the following are equivalent.
\begin{parts}
\Part{(i)} $\Omega_{f,x}=0$,
\Part{(ii)} For any $\Ocal_{Y, f(x)}$-algebra $A$ and any ideal $I\leq A$ with $I^2=0$ and for any commuting square of $\Ocal_{Y,f(x)}$-algebras
\[\begin{tikzcd}
    A/I & \Ocal_{X,x} \arrow[l] \arrow[ld, dashed] \\
    A \arrow[u] & \Ocal_{Y,f(x)} \arrow[u, swap] \arrow[l]
\end{tikzcd}\]
the diagonal map is unique.
\end{parts}
\end{proposition}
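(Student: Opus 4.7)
The plan is to work locally at $x$, reducing the statement to a purely ring-theoretic one about the map $C \to B$ where $B = \Ocal_{X,x}$ and $C = \Ocal_{Y,f(x)}$. Since formation of K\"ahler differentials commutes with localization, $\Omega_{f,x} \cong \Omega_{B/C}$, and the assertion becomes the standard characterization of formal unramifiedness of a ring map by uniqueness of infinitesimal lifts.

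For (i) $\Rightarrow$ (ii), I would assume that $g_1, g_2 : B \to A$ are two $C$-algebra maps inducing the same map $B \to A/I$. Their difference $D = g_1 - g_2$ takes values in $I$, and since $I^2 = 0$ the two $B$-module structures on $I$ induced by $g_1$ and $g_2$ coincide, making the notion of a ``$C$-derivation $B \to I$'' unambiguous. Expanding $g_1(bb') - g_2(bb')$ and discarding the cross term $D(b)D(b') \in I^2 = 0$ verifies the Leibniz rule, so $D$ is such a derivation. By the universal property it factors through a $B$-module homomorphism $\Omega_{B/C} \to I$; since $\Omega_{B/C} = 0$ by hypothesis (i), we conclude $D = 0$ and $g_1 = g_2$.

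For (ii) $\Rightarrow$ (i), I would exhibit the vanishing of $\Omega_{B/C}$ by forcing two canonical lifts to collide. Construct the square-zero extension $A = B \oplus \Omega_{B/C}$ with multiplication $(b, \omega)(b', \omega') = (bb', b\omega' + b'\omega)$, in which $I = 0 \oplus \Omega_{B/C}$ satisfies $I^2 = 0$ and $A/I \cong B$. Give $A$ its natural $C$-algebra structure via $C \to B \hookrightarrow A$. Both the inclusion $\iota(b) = (b,0)$ and the map $\sigma(b) = (b, db)$, where $d : B \to \Omega_{B/C}$ is the universal derivation, are $C$-algebra lifts of the identity $B \to A/I$; that $\sigma$ is multiplicative is precisely the Leibniz rule combined with $I^2 = 0$. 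Hypothesis (ii) forces $\iota = \sigma$, so $db = 0$ for every $b \in B$, and since $\Omega_{B/C}$ is generated as a $B$-module by the image of $d$, we conclude $\Omega_{f,x} = 0$.

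No serious obstacle arises; the argument is essentially bookkeeping with derivations. The only points requiring care are verifying that the two $B$-module structures on $I$ genuinely agree, which justifies the derivation language used in the first half, and invoking compatibility of K\"ahler differentials with localization to pass between the global stalk $\Omega_{f,x}$ and the purely algebraic module $\Omega_{B/C}$.
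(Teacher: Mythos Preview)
Your proof is correct and follows essentially the same approach as the paper: both directions proceed via the identification $\mathrm{Der}_{C}(B,I)\cong\Hom_B(\Omega_{B/C},I)$, and for (ii)$\Rightarrow$(i) your test algebra $B\oplus\Omega_{B/C}$ is canonically isomorphic to the paper's $(B\otimes_C B)/J^2$ via $z\otimes 1\mapsto(z,0)$, under which your maps $\iota,\sigma$ correspond exactly to the paper's $z\mapsto z\otimes 1$ and $z\mapsto 1\otimes z$.
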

\begin{proof}
Given an $\Ocal_{Y, f(x)}$-algebra $A$ and any ideal $I\leq A$ with $I^2=0$, suppose we have a commuting square, 
\[\begin{tikzcd}
    A/I & \Ocal_{X,x} \arrow[l] \\
    A \arrow[u] & \Ocal_{Y,f(x)} \arrow[u, swap] \arrow[l]
\end{tikzcd}.\]
Let $g:\Ocal_{X,x}\to A$ and $g':\Ocal_{X,x}\to A$ be homomorphisms that fit in the diagonal of the above diagram. Then $\theta=g-g'$ is a $\Ocal_{Y,f(x)}$-derivation of $\Ocal_{X,x}$ into $I$ (see \cite[Exer II.8.6(a)]{Hartshorne}). Since $\mathrm{Der}_{\Ocal_{Y,f(x)}}(\Ocal_{X,x}, I)=\Hom_{\Ocal_{X, x}}(\Omega_{\Ocal_{X, x}/\Ocal_{Y, f(x)}}, I)=\Hom_{\Ocal_{X, x}}(\Omega_{f,x}, I)=0$, we have $g=g'$.\\
Suppose the converse. Let $J=\Ker(\Ocal_{X, x}\otimes_{\Ocal_{Y, f(x)}}\Ocal_{X, x}\to \Ocal_{X, x})$ and let $A=(\Ocal_{X, x}\otimes_{\Ocal_{Y, f(x)}}\Ocal_{X, x})/J^2$ and $I=J/J^2$. Note that $J/J^2=\Omega_{f,x}$. Then consider the maps $g:\Ocal_{X,x}\to A$ defined by $z \mapsto z\otimes 1 \bmod J^2$ and $g':\Ocal_{X,x}\to A$ defined by $z\mapsto 1\otimes z \bmod J^2$. Then by our hypothesis about the diagonal map being unique, we have $g=g'$. Note that $g-g'=0$ is the universal derivation, thus, $\Omega_{f,x}=0$.
\end{proof}

\begin{lemma}[Spreading out]
\label{lemma:spreading}
Let $R$ be a domain and let $K=\mathrm{Frac}(R)$ be its field of fractions.
Let $f:\Xcal \to \Xcal'$ be a morphism between schemes of finite type over $R$.
If $f_K : \Xcal_K \to \Xcal'_K$ is unramified then there exists $\ell \in R$ such that $f_{R[1/\ell]} : \Xcal_{R[1/\ell]}\to \Xcal'_{R[1/\ell]}$ is unramified.
\end{lemma}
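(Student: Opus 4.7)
The strategy is a standard spreading-out argument. Recall that a morphism is unramified if and only if it is locally of finite type and has vanishing relative sheaf of differentials. Since $\Xcal$ is of finite type over $R$, the map $f$ is automatically locally of finite type, and this property is preserved under any base change; hence the only real content of the lemma is to produce a single $\ell \in R \setminus \{0\}$ witnessing the vanishing of $\Omega_{f_{R[1/\ell]}}$.

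First, I would pass to an affine cover. Using quasi-compactness of $\Xcal'$ (finite type over $R$), cover $\Xcal'$ by finitely many affine opens $V_j = \Spec A_j$, and for each $j$ use quasi-compactness of $\Xcal$ to cover $f^{-1}(V_j)$ by finitely many affine opens $U_{j,k} = \Spec B_{j,k}$. On each patch $f$ is given by a ring homomorphism $A_j \to B_{j,k}$. Since $B_{j,k}$ is a finitely generated $R$-algebra and the structure map $R \to B_{j,k}$ factors through $A_j$, it follows that $B_{j,k}$ is in fact a finitely generated $A_j$-algebra, and therefore $\Omega_{B_{j,k}/A_j}$ is a finitely generated $B_{j,k}$-module.

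The key algebraic input is the following: if $M$ is a finitely generated $B_{j,k}$-module with $M \otimes_R K = 0$, then some nonzero $\ell \in R$ annihilates $M$. Indeed, $M \otimes_R K$ is the localization $(R \setminus \{0\})^{-1} M$, so each of the finitely many generators $m_1, \ldots, m_s$ of $M$ is killed by some nonzero $\ell_i \in R$, and the product $\ell_1 \cdots \ell_s$ annihilates all of $M$. Applying this to $M = \Omega_{B_{j,k}/A_j}$, together with the base-change identity $\Omega_{(B_{j,k})_K / (A_j)_K} = \Omega_{B_{j,k}/A_j} \otimes_R K$ and the hypothesis that $f_K$ is unramified, yields some $\ell_{j,k} \in R \setminus \{0\}$ with $\Omega_{B_{j,k}/A_j}[1/\ell_{j,k}] = 0$.

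Taking $\ell$ to be the product of the finitely many $\ell_{j,k}$, the sheaf $\Omega_{f_{R[1/\ell]}}$ vanishes on every member of an affine open cover of $\Xcal_{R[1/\ell]}$, hence vanishes globally, and $f_{R[1/\ell]}$ is unramified. I do not anticipate any serious obstacle; the only slightly delicate point is the finite generation of $B_{j,k}$ as an $A_j$-algebra, which requires both that $\Xcal$ be of finite type over $R$ and that $f$ be a morphism over $R$ so that the $R$-algebra generators of $B_{j,k}$ may be used as $A_j$-algebra generators.
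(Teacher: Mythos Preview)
Your argument is correct and follows the same overall architecture as the paper's proof: pass to a finite affine cover, use that $\Omega_{B_{j,k}/A_j}$ is finitely generated, exploit the base-change identity $\Omega_{B/A}\otimes_R K\cong\Omega_{B_K/A_K}$, and clear finitely many denominators to obtain a single $\ell$. The one genuine difference is in how the denominator-chasing is carried out. You argue abstractly: a finitely generated module whose localization at $R\setminus\{0\}$ vanishes is annihilated by a nonzero element of $R$. The paper instead works with the conormal/cotangent exact sequence
\[
\Omega_{A_j/R}\otimes_{A_j}B_{j,k}\xrightarrow{D_{jk}}\Omega_{B_{j,k}/R}\to\Omega_{B_{j,k}/A_j}\to 0,
\]
chooses explicit polynomial presentations of $A_j$ and $B_{j,k}$, and solves the linear system $D_{jk}(\sum_\beta g_{\alpha\beta}\,dy_\beta)=dx_\alpha$ over $K$; the $\ell_{jk}$ are then the denominators of the $g_{\alpha\beta}$. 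This buys the paper an explicitly computable $\ell$, which is exactly what is needed later (see the Remark following the lemma and the decidability argument in Section~\ref{section:decide}). Your version is cleaner and perfectly adequate as a proof of the lemma as stated, but if you want to match the paper's emphasis on effectivity you should note that finding an $R$-annihilator of finitely many explicit generators of $\Omega_{B_{j,k}/A_j}$ is itself an effective computation once presentations are given.
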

\begin{proof}
Let $V_i=\Spec A_i\subseteq \Xcal'$ be an affine open subset and let $U_{ij}=\Spec B_{ij}\subseteq f^{-1}(V_i)$ be an affine open subset.
Denote $f_{ij}:=f|_{U_{ij}}:U_{ij}\to V_i$.
Put $A_i= R[y_1, \ldots, y_m]/J_i$ and $B_{ij}=R[x_1, \ldots, x_n]/I_{ij}$ and let $f^{\#}_{ij}:A_i\to B_{ij}$ be the corresponding $R$-algebra homomorphism of $f_{ij}$.
Then we have an exact sequence of modules of differentials,
\[
\Omega_{A_i/R}\otimes_{A_i}B_{ij} \to \Omega_{B_{ij}/R} \to \Omega_{B_{ij}/A_i} \to 0.
\]
Since $f$ is unramified over $K$, tensor the above sequence to obtain the following
\[
\begin{tikzcd}[row sep=0.8em]
\Omega_{A_i/R}\otimes_{A_i}B_{ij} \otimes_R K \arrow[r, "D_{ij}"] & \Omega_{B_{ij}/R} \otimes_R K \arrow[r] & \Omega_{B_{ij}\otimes_R K/A_i \otimes_R K} \arrow[equal]{d} \arrow[r] & 0\\
& & 0 &
\end{tikzcd}.
\]
Thus, we have a surjective morphism of $K$-modules,
\[
\begin{tikzcd} 
\Omega_{(K[y_1, \ldots, y_m]/J_{i})/K} \otimes_{K[y_1,\ldots,y_m]/J_i} K[x_1, \ldots, x_n]/I_{ij} \arrow[r, tail, twoheadrightarrow, "D_{ij}"] & \Omega_{(K[x_1, \ldots, x_n]/I_{ij})/K}.
\end{tikzcd}
\]
For each generator
$dx_\alpha\in\Omega_{(K[x_1, \cdots, x_n]/I_{ij})/K}$ there are
$g_{\alpha\beta}\in K[x_1, \cdots, x_n]/I_{ij}$ such that
\[
D_{ij}\left( \sum_{\beta=1}^{m} g_{\alpha\beta}dy_\beta \right)=dx_\alpha.
\]
Expanding the above we have 
\begin{equation}
\label{jac}
    \sum_{\gamma=1}^{n}\sum_{\beta=1}^{m} g_{\alpha\beta}\partial_\gamma f_\beta dx_{\gamma} =dx_\alpha.
\end{equation}
where $f_\beta=f^{\#}_{ij}(y_\beta)$.
By chasing denominators of the $g_{\alpha\beta}$, let $\ell_{ij}\in R$ such that $g_{\alpha\beta}$ lies in $R[1/\ell]$.
Thus the following
\begin{equation}
\label{jac2}
   D_{ij}\left( \sum_{\beta=1}^{m} \ell_{ij}g_{\alpha\beta}dy_\beta \right)=\sum_{\gamma=1}^{n}\sum_{\beta=1}^{m} \ell_{ij}g_{\alpha\beta}\partial_\gamma f_\beta dx_{\gamma} =\ell_{ij}dx_\alpha.
\end{equation}
is an equation in $\Omega_{(R[x_1, \cdots, x_n]/I_{ij})/R}$ for all $\alpha$.
Let $\ell=\prod_{i,j} \ell_{ij}$ and put $U=\Spec R[\ell^{-1}]$.
Then \eqref{jac2} implies that the first arrow of the following exact sequence is surjective,
\[
f^*_U \Omega_{\Xcal'_U}\to \Omega_{\Xcal_U}\to \Omega_{f_U}\to 0,
\]
implying $\Omega_{f_U}=0$ and therefore, $f_U$ is unramified.
\end{proof}

\begin{remark}
In the proof of Lemma~\ref{lemma:spreading}, if the generators of $J_i$ and $I_{ij}$ of $A_i$ and $B_{ij}$ and the map $f^\#_{ij}:A_i\to B_{ij}$ are explicitly given then we can solve the linear equation of \eqref{jac} and effectively compute the $g_{\alpha\beta}$.
Hence we can effectively obtain the unramified morphism $f_{R[1/\ell]}$.
\end{remark}

\begin{lemma}
\label{lemma:basechange}
Let $f:X\to Y$ be a morphism of schemes of finite type over a Noetherian ring $R$.
Let $\gm\leq R$ be a maximal ideal and let $\bar{X}=X_{R/\gm}$.
Let $\bar{x}$ and $x$ be points of $\bar{X}$ and $X$ respectively such that $\bar{x}$ maps to $x$ by the embedding $i: \bar{X} \to X$.
Let $\bar{f}$ be the restriction of $f$ to $\bar{X}$.
If $\Omega_{\bar{f}, \bar{x}}=0$ then $\Omega_{f,x}=0$.
\end{lemma}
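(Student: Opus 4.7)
The plan is to combine the base change formula for relative differentials with Nakayama's lemma. First I would use that $\bar f:\bar X\to\bar Y$ is the base change of $f$ along $\Spec(R/\gm)\to\Spec R$, so the standard functoriality of differentials under base change yields an isomorphism $\Omega_{\bar X/\bar Y}\cong i^*\Omega_{X/Y}$, where $i:\bar X\hookrightarrow X$ is the closed immersion cut out by the image of $\gm$ in $\Ocal_X$. Passing to stalks at $\bar x$ and using $\Ocal_{\bar X,\bar x}=\Ocal_{X,x}/\gm\Ocal_{X,x}$, this gives
\[
\Omega_{\bar f,\bar x}\;\cong\;\Omega_{f,x}\otimes_{\Ocal_{X,x}}\Ocal_{X,x}/\gm\Ocal_{X,x}\;\cong\;\Omega_{f,x}/\gm\,\Omega_{f,x}.
\]

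Next, assuming $\Omega_{\bar f,\bar x}=0$, the above reads $\Omega_{f,x}=\gm\,\Omega_{f,x}$. To set up Nakayama, I would note that $x$ lies over the closed point $\gm\in\Spec R$: since $\bar x$ maps to $x$ and $\bar X$ is the fiber of $X\to\Spec R$ over $\gm$, the image of $x$ in $\Spec R$ is a prime containing $\gm$, and the maximality of $\gm$ forces equality. Hence the composite $R\to\Ocal_{X,x}$ sends $\gm$ into the maximal ideal $\gm_x$, so $\gm\,\Omega_{f,x}\subseteq\gm_x\,\Omega_{f,x}$ and therefore $\gm_x\,\Omega_{f,x}=\Omega_{f,x}$.

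Finally, since $f$ is of finite type over the Noetherian ring $R$, the sheaf $\Omega_f$ is coherent on $X$, so the stalk $\Omega_{f,x}$ is a finitely generated module over the local ring $(\Ocal_{X,x},\gm_x)$. Nakayama's lemma then forces $\Omega_{f,x}=0$, as desired. The only delicate point, though modest, is the interpretation of $\bar f$ as the base change $\bar X\to\bar Y$ so that the formula $\Omega_{\bar X/\bar Y}=i^*\Omega_{X/Y}$ applies; if one instead read $\bar f$ as the composite $\bar X\xrightarrow{i}X\xrightarrow{f}Y$, then the conormal exact sequence would only exhibit $\Omega_{\bar X/Y}$ as a quotient of $i^*\Omega_{X/Y}$, and the desired implication would fail.
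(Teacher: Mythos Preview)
Your proof is correct and follows essentially the same approach as the paper's: both use the base change isomorphism $\Omega_{\bar f,\bar x}\cong \Omega_{f,x}\otimes_{\Ocal_{X,x}}\Ocal_{\bar X,\bar x}$ together with coherence of $\Omega_f$ and Nakayama's lemma. Your version is in fact slightly more explicit than the paper's, since you spell out why $x$ lies over $\gm$ and hence why $\gm\,\Omega_{f,x}\subseteq\gm_x\,\Omega_{f,x}$, which is what makes Nakayama over the local ring $\Ocal_{X,x}$ legitimate; the paper leaves this step implicit.
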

\begin{proof}
Since $Y$ is Noetherian and $f$ is of finite type, note that $\Omega_{f}$ is a coherent $O_X$-module.
\begin{align*}
    \Omega_{f,x}\otimes_{R_\gm}R_\gm/\gm R_\gm &\cong \Omega_{f,x} \otimes_{\Ocal_{X,x}}\Ocal_{X,x} \otimes_{R_\gm}R_\gm/\gm R_\gm\\
    &\cong \Omega_{f,x}\otimes_{\Ocal_{X,x}}\Ocal_{\bar{X},\bar{x}}\\
    &\cong \Omega_{\bar{f},\bar{x}} =0
\end{align*}
Hence, by Nakayama's lemma, $\Omega_{f,x}=0$.
\end{proof}

\section{Upper bound on the size of preperiodic orbits: Proof of Theorem~\ref{theorem:singlemap}}
\label{section:bound1}

Proposition~\ref{proposition:key_fixed}, stated below is the key to proving our main theorems, which shows fixed points can be infinitesimally lifted.

\begin{proposition}
\label{proposition:key_fixed}
Let $f$ be an endomorphism of a separated scheme $X$ of finite type over a Noetherian domain $R$.
Let $\gm$ be a maximal ideal of $R$.
Suppose that $f_{R/\gm}$ has only unramified fixed $R/\gm$-points.
Suppose $x\in X(R)$ is an eventually $f$-fixed point. Then
\[
f(x)=x \iff f_{R/\gm}(\bar{x})=\bar{x}
\]
where $\bar{x}$ is $x \bmod \gm$.
\end{proposition}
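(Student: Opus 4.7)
The forward direction is immediate by reducing $f(x) = x$ modulo $\gm$, so the plan focuses on the converse. Assume $f_{R/\gm}(\bar{x}) = \bar{x}$. Since $x$ is eventually $f$-fixed, there is some $n \ge 0$ with $y := f^n(x) = f^{n+1}(x)$, so $y$ is a genuine fixed point of $f$. Because $\bar{x}$ is fixed by $f_{R/\gm}$, every iterate $f^i(x)$ reduces to $\bar{x}$ modulo $\gm$. I will prove $f^i(x) = y$ for all $0 \le i \le n$ by downward induction on $i$; the case $i = 0$ then gives $x = y$, whence $f(x) = f(y) = y = x$. The inductive step reduces to the following key claim: \emph{if $u, v \in X(R)$ both reduce to $\bar{x}$ modulo $\gm$, $f(v) = v$, and $f(u) = v$, then $u = v$.}

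To prove the claim, let $p \in X$ be the point underlying $\bar{x}$, and write $u_k, v_k$ for the reductions modulo $\gm^k$, interpreted as local ring maps $\Ocal_{X, p} \to R/\gm^k$. By Lemma~\ref{lemma:basechange}, the hypothesis that $f_{R/\gm}$ is formally unramified at $\bar{x}$ yields $\Omega_{f, p} = 0$, so Proposition~\ref{proposition:lifting} applies to $f$ at $p$. I prove $u_k = v_k$ by induction on $k$; the base case $k = 1$ is the hypothesis. For the step $k \to k+1$ with $k \ge 1$, take $A = R/\gm^{k+1}$ and $I = \gm^k/\gm^{k+1}$, so that $A/I = R/\gm^k$ and $I^2 = 0$ (since $2k \ge k+1$). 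In the commuting square of Proposition~\ref{proposition:lifting} with top arrow $v_k : \Ocal_{X,p} \to A/I$ and bottom arrow $v_{k+1} : \Ocal_{X,p} \to A$ (commutation reduces to $v_k \circ f^{\#} = v_k$, which holds because $v$ is fixed by $f$), both $v_{k+1}$ and $u_{k+1}$ are valid diagonal arrows $\Ocal_{X,p} \to A$: for $v_{k+1}$, by virtue of $v$ being fixed; for $u_{k+1}$, the reduction condition $\pi \circ u_{k+1} = v_k$ uses the inductive hypothesis $u_k = v_k$, while $u_{k+1} \circ f^{\#} = v_{k+1}$ comes from $f(u) = v$. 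Uniqueness of the diagonal forces $u_{k+1} = v_{k+1}$.

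To conclude $u = v$ in $X(R)$ from $u \equiv v \pmod{\gm^k}$ for every $k$, I use separatedness of $X$: the equalizer of $u$ and $v$ is a closed subscheme of $\Spec R$ whose defining ideal $J \subseteq R$ is contained in $\gm^k$ for all $k$. By Krull's intersection theorem applied to the Noetherian local ring $R_\gm$, one has $J R_\gm \subseteq \bigcap_k (\gm R_\gm)^k = 0$; and since $R$ is a domain, clearing denominators forces $J = 0$. The main technical obstacle is the careful identification of the commuting square and verification that both $u_{k+1}$ and $v_{k+1}$ meet the two diagonal conditions; once the directions of the ring maps are tracked correctly, the rest is bookkeeping.
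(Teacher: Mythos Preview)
Your argument is correct, and it rests on the same mechanism as the paper's proof: use the unique infinitesimal lifting property (Proposition~\ref{proposition:lifting}) to upgrade the congruence modulo~$\gm$ to congruences modulo~$\gm^k$ for every $k$, then invoke separatedness together with Krull's intersection theorem in the Noetherian domain $R$.

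The difference is organizational. The paper compares $x$ and $f\circ x$ directly, taking a power $f^{\ell_r}$ (large enough that $f^{\ell_r}\circ x$ is already fixed modulo $\gm^r$) as the right-hand vertical map in the lifting square, and then inducts on $r$. You instead first perform a downward induction along the orbit $x, f(x),\dots, f^n(x)=y$, reducing everything to the clean one-step claim ``$f(u)=v$, $f(v)=v$, $u\equiv v\pmod\gm$ $\Rightarrow$ $u=v$,'' and then prove that claim by applying the lifting property to $f$ itself. Your packaging has the minor advantage of never needing the (easy) observation that iterates of a formally unramified map remain formally unramified at the fixed point, at the cost of one extra layer of induction; beyond this the two proofs are interchangeable.
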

\begin{proof}
The only if direction is clear.
For the converse, suppose $f_{R/\gm}(\bar{x})=\bar{x}$.
For an integer $r\geq1$, let $\pi_r : \Spec(R/\gm^r)\to\Spec(R)$ be the canonical map induced by projection of rings.
Since $x$ is a fixed point modulo $\gm$,
\[
x\circ \pi_1 = f \circ x \circ \pi_1.
\]
Since $x$ is an eventually $f$-fixed point, for each positive integer $r$, there exists an integer $\ell_r \geq 0$ such that
\[
f^{\ell_r}\circ x \circ \pi_r=f^{\ell_r + 1} \circ x \circ \pi_r.
\]
Hence we have the following two commutative diagrams
where each top and bottom arrows are equal,
$$
\begin{tikzcd}[row sep=huge, column sep=6.5em]
    \Spec(R/\gm)\arrow[d] \arrow[r, "x \circ \pi_1"]& X  \arrow[d, "f^{\ell_2}"]\\
    \Spec(R/\gm^2) \arrow[r, "f^{\ell_2} \circ x \circ \pi_2"'] \arrow[ru, "x \circ \pi_2" description]& X
\end{tikzcd}, \begin{tikzcd}[row sep=huge, column sep=6.5em]
    \Spec(R/\gm)\arrow[d] \arrow[r, "f \circ x \circ \pi_1"]& X  \arrow[d, "f^{\ell_2}"]\\
    \Spec(R/\gm^2) \arrow[r, "f^{\ell_2+1} \circ x \circ \pi_2"'] \arrow[ru, "f \circ x \circ \pi_2" description]& X
\end{tikzcd}.$$
For every $r\geq 1$, $\Spec R/\gm^r$ is a one point space, by abuse of notation we shall denote the point of $\Spec R/\gm^r$ by $s$ for every $r$.
Consider the corresponding diagrams on the stalks,
$$
\begin{tikzcd}[row sep=huge, column sep=6.5em]
    \Ocal_{\Spec(R/\gm), s} & \Ocal_{X,\bar{x}} \arrow[l, "(x \circ \pi_1)^\#"'] \arrow[ld, "(x \circ \pi_2)^\#" description] \\
    \Ocal_{\Spec(R/\gm^2),s}\arrow[u]  & \Ocal_{X,\bar{x}} \arrow[u, swap, "(f^{\ell_2})^\#" ] \arrow[l, "(f^{\ell_2} \circ x \circ \pi_2)^\#"]
\end{tikzcd}, \begin{tikzcd}[row sep=huge, column sep=6.5em]
    \Ocal_{\Spec(R/\gm), s} & \Ocal_{X,\bar{x}} \arrow[l, "(\f \circ x \circ \pi_1)^\#"'] \arrow[ld, "(f \circ x \circ \pi_2)^\#" description] \\
    \Ocal_{\Spec(R/\gm^2),s}\arrow[u]  & \Ocal_{X,\bar{x}} \arrow[u, swap, "(f^{\ell_2})^\#" ] \arrow[l, "(f^{\ell_2+1} \circ x \circ \pi_2)^\#"]
\end{tikzcd}.$$
The left hand side map of the above two diagrams is
\[
R_\gm/(\gm\otimes_R R_\gm)^2 \to R_\gm/(\gm\otimes_R R_\gm),
\]
which is a first order thickening, hence, by Lemma~\ref{lemma:basechange} and Proposition~\ref{proposition:lifting}, the diagonal map is unique implying the diagonal map of the two diagrams to be equal.
Thus $x \circ \pi_2$ and $f \circ x \circ \pi_2$ induce the same homomorphism on all stalks.
Hence, they are equal as scheme morphisms,
\[
x \circ \pi_2 = f \circ x \circ \pi_2,
\]
that is, $x$ is a fixed point modulo $\gm^2$.
Consider a similar commutative diagrams as above
$$
\begin{tikzcd}[row sep=huge, column sep=6.5em]
    \Spec(R/\gm^r)\arrow[d] \arrow[r, "x \circ \pi_r"]& X  \arrow[d, "f^{\ell_{r+1}}" ]\\
    \Spec(R/\gm^{r+1}) \arrow[r, "f^{\ell_{r+1}} \circ x \circ \pi_{r+1}"'] \arrow[ru, "x \circ \pi_{r+1}" description]& X
\end{tikzcd}, \begin{tikzcd}[row sep=huge, column sep=6.5em]
    \Spec(R/\gm^r)\arrow[d] \arrow[r, "f \circ x \circ \pi_r"]& X  \arrow[d, "f^{\ell_{r+1}}" ]\\
    \Spec(R/\gm^{r+1}) \arrow[r, "f^{\ell_{r+1}+1} \circ x \circ \pi_{r+1}"'] \arrow[ru, " f\circ x \circ \pi_{r+1}" description]& X
\end{tikzcd}$$
and by induction, we see
\[
  x \circ \pi_r = f \circ x \circ\pi_r
\]
for all $r\geq 1$. Thus, $x$ and $f \circ x$ are equal in
$X((R,\gm)\, \widehat{}\ )$, where $(R,\gm)\, \widehat{}$ is the completion of $R$ by $\gm$.
$X$ is separated and $R$ is Noetherian domain, so
$X(R)\to X((R,\gm)\, \widehat{}\ )$ is injective,
therefore, $x$ is $f$-fixed point in $X(R)$.
\end{proof}

We now restate Whang's theorem \cite[Theorem~1.2]{Whang} which plays a key role in proving our main theorems. Note that we have adjusted the theorem to match with our notations.
\begin{theorem}
\label{junhowhang}
\cite[Theorem~1.2]{Whang}
    Let $X$ be a separated scheme of finite presentation over a finitely presented domain $R$ of characteristic zero. There is an effective universal constant $C(X,R)$ such that, for any set $S$ of endomorphisms of $X/R$, and any $S$-periodic point $x\in X(R)$, we have
    $$|\Ocal_S(x)|\leq C(X,R).$$
\end{theorem}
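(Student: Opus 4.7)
The plan is to deduce the orbit bound by reducing modulo a maximal ideal of finite residue field and controlling how many $R$-points of a periodic orbit can lie over a single point of the reduction, with uniformity in $S$ arising from a formal rigidity argument for permutation actions on formal neighborhoods.

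First, since $R$ is a finitely presented domain of characteristic zero, it is a finitely generated $\ZZ$-algebra, hence Jacobson, so I can effectively choose a maximal ideal $\gn\leq R$ with finite residue field $k=R/\gn$ (for example by picking a closed point of $\Spec R$ above a rational prime). Because $X$ is of finite presentation and separated, the reduction $X_k$ is a separated $k$-scheme of finite type and $|X(k)|$ is finite and effectively computable. A spreading-out argument in the spirit of Lemma~\ref{lemma:spreading} lets me further arrange that $X_k$ enjoys convenient generic smoothness or étaleness properties away from a controlled closed subset.

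Next, for any $S$-periodic $x\in X(R)$, the reduction map sends $\Orbit_S(x)$ to an $S$-orbit inside the finite set $X(k)$, so it suffices to bound the fibers of this map uniformly in $S$. Two distinct points $y,z\in\Orbit_S(x)$ mapping to the same $\bar y\in X(k)$ both lie in the formal neighborhood $\widehat{\Ocal_{X,\bar y}}$, and any element of $\langle S\rangle$ carrying $y$ to $z$ induces a formal endomorphism of this neighborhood, which moreover must be a formal automorphism since $S$ acts by permutations on $\Orbit_S(x)$. A quantitative formal linearization argument, analogous in spirit to Proposition~\ref{proposition:lifting} and Proposition~\ref{proposition:key_fixed}, should then bound the size of such a cluster of periodic points over $\bar y$ purely in terms of intrinsic invariants of $X/R$ at $\bar y$, with no dependence on $S$.

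The main obstacle is precisely this uniform-in-$S$ fiber bound. Unlike in Proposition~\ref{proposition:key_fixed}, where unramifiedness forces uniqueness of the lift, here no such hypothesis is available, and the cluster of periodic points over a single $\bar y$ need not be a singleton. A finer analysis is required---for instance, bounding the possible orders of finite-order formal automorphisms of Artinian local rings of bounded embedding dimension over $R$, or invoking a Noetherian-constructibility argument to extract a constant independent of $S$. Once that bound is secured, multiplication by $|X(k)|$ yields $|\Orbit_S(x)|\leq C(X,R)$ with $C$ effective, completing the proof.
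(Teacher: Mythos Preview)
The paper does not prove this statement at all; it is quoted verbatim as \cite[Theorem~1.2]{Whang} and used as a black box throughout (in the proofs of Theorem~\ref{theorem:singlemap}, Proposition~\ref{prop:periodicequiv}, Theorem~\ref{theorem:pathbound}, and Theorem~\ref{theorem:manymap}). So there is no ``paper's own proof'' to compare your attempt against.

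As for the proposal itself, it is a plan rather than a proof, and you say so yourself: you correctly isolate the crux---bounding, uniformly in $S$, the number of $R$-points of a periodic orbit lying over a single point of $X(R/\gn)$---and then explicitly leave it open (``a finer analysis is required\ldots once that bound is secured''). The reduction-modulo-$\gn$ framework and the idea of analyzing the induced action on the formal completion at $\bar y$ are indeed the natural opening moves, and your suggestion of bounding orders of finite-order formal automorphisms is in the right direction. But this is precisely where all the content lies: nothing in what you wrote explains why a finite group of formal automorphisms of $\widehat{\Ocal_{X,\bar y}}$ fixing an $R$-point must have order bounded by an invariant of $X/R$ alone, nor why distinct periodic points over the same $\bar y$ are even related by such an automorphism in a way that forces a bound. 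Invoking Proposition~\ref{proposition:lifting} or Proposition~\ref{proposition:key_fixed} ``in spirit'' does not help here, since those results depend essentially on an unramifiedness hypothesis that you have no right to assume. Until this step is actually carried out, what you have is a reasonable outline of where the difficulty lives, not a proof.
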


\vspace{3mm}

\begin{proof}[Proof of Theorem~\ref{theorem:singlemap}]
Let $x\in X(R)$ be an $f$-preperiodic point. Let $n\geq 0$ and $m>0$ be the smallest integers that satisfy $f^n(x)=f^{n+m}(x)$.
Let $C=C(X,R)$ be the universal upper bound on the size of periodic orbits in Theorem~\ref{junhowhang}.
Then note that $m\leq C$. Let us denote $x$ modulo $\gm$ as $\bar{x}$, which is the composition of $\Spec(R/\gm)\to\Spec(R)$ and $x:\Spec(R) \to X$.
Suppose we have integers $0\leq i<j$ such that $f_{R/\gm}^i(\bar{x})=f_{R/\gm}^j(\bar{x})$.
Note that $x$ is eventually $f^{m(j-i)}$-fixed point and $f_{R/\gm}^i(\bar{x})$ is fixed point of $f_{R/\gm}^{m(j-i)}$, hence, by Proposition~\ref{proposition:key_fixed}, 
\[
f^i(x)=f^{m(j-i)}(f^i(x))=f^{i+m(j-i)}(x).
\]
By the minimality of $n$, we have $i\geq n$. We have shown that if $f_{R/\gm}^i(\bar{x})=f_{R/\gm}^j(\bar{x})$, for $i<j$, then $i\geq n$. That is, the following projection modulo $\gm$ map is a bijection, 
\[
\{f^i(x) : 0\leq i<n\}\to\{f_{R/\gm}^i(\bar{x}) : 0\leq i<n\}.
\]
Thus
\begin{align*}
|\Orbit_f(x)|&=n+m\\
& = |\{f_{R/\gm}^i(\bar{x}) : 0\leq i<n\}|+m\\
&\leq |X(R/\gm)|+C.
\end{align*}
Note that $R/\gm$ is a finite field. Indeed, $\ZZ$ is Jacobson ring and $R$ is finitely generated $\ZZ$-algebra then, by nullstellensatz, $\gm\cap\ZZ$ is maximal ideal of $\ZZ$ and $R/\gm$ is a finite extension of $\ZZ/(\gm\cap\ZZ)$.
\end{proof}

Let $f:X\to X$ be an endomorphism over $R$, and let $\gm\leq R$ be an ideal.
Suppose that $R$, $X$, $f$, and $\gm$ all satisfy the hypotheses of Proposition~\ref{proposition:key_fixed}.
Let $x\in X(R)$ be an $f$-preperiodic point so that $f^n(x)=f^{n+m}(x)$ with $n\geq0$ and $m>0$ minimal. Observing the proof of Theorem~\ref{theorem:singlemap}, we have shown that the following modulo $\gm$ map is a bijection, 
\begin{equation}
\label{eq:bijection}
    \{f^i(x) : 0\leq i<n\}\to\{f_{R/\gm}^i(\bar{x}) : 0\leq i<n\}.
\end{equation}

Thus, regardless of the size $m$ of the periodic part, we can bound the number of iterations required to enter a periodic cycle by $|X(R/\gm)|$.
So, if $x\in X(k)$ is a $f$-preperiodic point with periodic part of size $m$ then $|\Orbit_f(x)|\leq |X(R/\gm)|+m$.\\
Moreover, if sharper bounds on the size of periodic orbits are available, then improved bounds for preperiodic orbits can be derived, as demonstrated in the following Theorem~\ref{theorem:singlemap2} and Example~\ref{example:ZZ}.

\begin{theorem}
\label{theorem:singlemap2}
Let $(R, \gm)$ be a discrete valuation ring with finite residue field $k$ of characteristic $p>0$.
Let $X$ be a separated scheme of finite presentation over $R$ with regular special fiber.
Let $f:X\to X$ be an endomorphism such that $f_{k}$ has only unramified periodic $k$-points.
Then
\[
|\Orbit_f(x)| < |X(k)|((|k|^d-1)p^{v(p)}+1)
\]
for all $f$-preperiodic $x\in X(R)$, where $d=\dim X_k$ and $v$ is the normalized valuation on $R$.
\end{theorem}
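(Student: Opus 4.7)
The plan is to refine the decomposition of $\Orbit_f(x)$ into preperiodic and periodic parts from the proof of Theorem~\ref{theorem:singlemap}, and to bound each part separately using the DVR structure and the regularity of $X_k$. Let $x\in X(R)$ be $f$-preperiodic, so $f^n(x)=f^{n+m}(x)$ for the minimal integers $n\ge 0$ and $m>0$. Set $y=f^n(x)$, let $m'$ be the $f_k$-period of $\bar y := y\bmod\gm$ (which necessarily divides $m$), and write $M := m/m'$.

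First I would sharpen the preperiodic estimate. As in the proof of Theorem~\ref{theorem:singlemap}, any coincidence $f_k^i(\bar x)=f_k^j(\bar x)$ with $0\le i<j$ forces $i\ge n$ via Proposition~\ref{proposition:key_fixed}. Applying this to pairs with $0\le i<j< n+m'$ and using that $f_k^i(\bar x)$ for $i\ge n$ lies on the $f_k$-cycle of length $m'$ (which would force $m'\mid j-i$), one rules out any such coincidence, so the $n+m'$ reductions $\bar x, f_k(\bar x),\ldots, f_k^{n+m'-1}(\bar x)$ are pairwise distinct in $X(k)$. Hence
\[
n + m' \le |X(k)|.
\]

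The main technical step is bounding $M$ by local analysis at $\bar y$. Put $g := f^{m'}$, so $g_k$ fixes $\bar y$, $g_k$ is unramified at $\bar y$, and $y$ is a $g$-periodic point of period $M$ reducing to $\bar y$. Replacing $X$ by the scheme-theoretic closure of its generic fiber (which does not alter $X(R)$) I may assume $X$ is $R$-flat, and then regularity of $X_k$ at $\bar y$ makes $X$ smooth over $R$ at $\bar y$, so $\widehat\Ocal_{X,\bar y}\cong \widehat R[[t_1,\ldots,t_d]]$. The map $g$ is then given in these coordinates by a power-series map $G\colon\gm^d\to\gm^d$ with $G(0)=0$ whose linearization $\bar A := \overline{DG(0)}$ lies in $\GL_d(k)$ by the unramified hypothesis, and $y$ corresponds to a nonzero $v\in\gm^d$ with $G^M(v)=v$. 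A $\gm$-adic linearization estimate, in the spirit of Pezda's bounds for periods of $p$-adic polynomial dynamics, then yields
\[
M \le (|k|^d-1)\, p^{v(p)};
\]
roughly, the tame part of $M$ is governed by the order of (the semisimple part of) $\bar A$ on a cyclic invariant subspace of $k^d$, which divides $|k|^d-1$, while the wild contribution coming from the unipotent behavior of $\bar A$ together with the iterative lifting of fixed-point congruences modulo successively higher powers of $\gm$ is absorbed in a factor $p^{v(p)}$ reflecting the ramification of $p$ in $R$.

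Combining the two estimates,
\[
n+m = n + m'M \le (|X(k)|-m') + m'(|k|^d-1)p^{v(p)} = |X(k)| + m'\bigl((|k|^d-1)p^{v(p)} - 1\bigr) \le |X(k)|(|k|^d-1)p^{v(p)},
\]
which is strictly less than $|X(k)|\bigl((|k|^d-1)p^{v(p)}+1\bigr)$ because $|X(k)|\ge 1$. The main obstacle is the local bound $M \le (|k|^d-1)p^{v(p)}$; this is where all three of the sharper hypotheses (DVR base, regular special fiber, unramifiedness at periodic points) are genuinely used, whereas the preperiodic refinement $n+m'\le |X(k)|$ is a direct byproduct of Proposition~\ref{proposition:key_fixed} and requires no new input.
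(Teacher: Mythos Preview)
Your approach is correct and structurally close to the paper's, but with a genuine difference in how the work is apportioned. The paper's proof is two lines: it invokes \cite[Propositions~1 and~2]{Fakhruddin} as a black box to bound the full periodic part by $m\le |X(k)|(|k|^d-1)p^{v(p)}$, and then adds the tail estimate $n\le|X(k)|$ coming from the bijection~\eqref{eq:bijection}. You instead factor $m=m'M$, absorb $m'$ into the tail to obtain the sharper inequality $n+m'\le|X(k)|$ (a pleasant refinement not made explicit in the paper), and then bound $M\le(|k|^d-1)p^{v(p)}$ by a direct local analysis at $\bar y$. That local step---passing to power-series coordinates via smoothness of $X$ at $\bar y$, using the unramified hypothesis to make the linearization lie in $\GL_d(k)$, and then running a Pezda-style $\gm$-adic iteration---is exactly the content of the Fakhruddin propositions the paper cites, so you are effectively reproving what the paper imports. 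Your route yields the slightly stronger conclusion $|\Orbit_f(x)|\le|X(k)|(|k|^d-1)p^{v(p)}$, at the cost of having to carry out the local computation yourself; as you acknowledge, your sketch of that step is only heuristic, and to make the argument rigorous you should either complete the $\gm$-adic linearization bound carefully or simply cite Fakhruddin as the paper does.
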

\begin{proof}
    By \cite[Proposition~1]{Fakhruddin} and \cite[Proposition~2]{Fakhruddin}, all periodic orbits are bounded by $|X(k)|(|k|^d-1)p^{v(p)}$. Hence, by \eqref{eq:bijection}, we have
    \begin{align*}
        |\Orbit_f(x)|&<|X(k)|+|X(k)|(|k|^d-1)p^{v(p)}\\
        &=|X(k)|((|k|^d-1)p^{v(p)}+1)
    \end{align*}
    for all $f$-preperiodic point $x\in X(R)$.
\end{proof}
\begin{remark}
\begin{parts}
\Part{(a)} If we take $d=\max\{ dim_{\kappa(x)}m_x/m_x^2 : x\in X(k) \}$ where $m_x$ is the maximal ideal of $\Ocal_{X,x}$, then we can remove the hypothesis that $X$ has regular special fiber.
\Part{(b)} Theorem~\ref{theorem:singlemap2} applies to cases where Theorem~\ref{theorem:singlemap} does not, such as when $R$ is the ring of $p$-adic integers $\ZZ_p$.
\end{parts}
\end{remark}
\begin{example}
\label{example:ZZ}
Let $f:\ZZ^n\to\ZZ^n$ be a polynomial with Jacobian determinant $J$.
If there is a prime $p$ such that $J(y)\not\equiv 0\mod p$ for all periodic points $y\in\ZZ^n$ then for all preperiodic point $x\in\ZZ^n$, we have the following:
\begin{parts}
\Part{(a)}For $n=2$, in \cite{PezdaZ2} it is shown that the optimal bound for the size of periodic cycles is $24$, hence
\[
|\Orbit_f(x)|< p^2 + 24.
\]
\Part{(b)}For $n=3$, in \cite{Pezda3} it is shown that the optimal bound for the size of periodic cycles is $112$, hence
\[
|\Orbit_f(x)|< p^3 + 112.
\]
\Part{(c)}In general, in \cite{PezdaZN} it is shown that the periodic cycle of an $n$ variable integer polynomial is bounded by $2\cdot(4^n - 2^n)$, hence 
\[
|\Orbit_f(x)|< p^n + 2(4^n-2^n) .
\]
\end{parts}
\end{example}

\begin{remark}
Let $f:X\to X$ be an endomorphism and suppose there exists a subscheme $V\subseteq X$ such that $f(V)\subseteq V$.
Then we can obtain the upper bound on the size of preperiodic orbits as in Theorem~\ref{theorem:singlemap} and Theorem~\ref{theorem:singlemap2} only assuming that the restricted morphism $(f|_V)_{R/m}$ has only unramified periodic $R/m$-points.
\end{remark}

\section{Upper bound on the size of finite orbits: Proof of Theorem~\ref{theorem:manymap}}
\label{section:manymap}

In this section we prove Theorem~\ref{theorem:manymap}. The key step in its proof is Theorem~\ref{theorem:pathbound}, which relies on Lemma~\ref{lemma:check_periodic} and Proposition~\ref{prop:periodicequiv}. We then prove several auxiliary lemmas (Lemma~\ref{lemma:lvlincrease}, Lemma~\ref{lemma:lvl2}, Lemma~\ref{lemma:lvl3}), which, together with Theorem~\ref{theorem:pathbound}, allows us to complete the proof of Theorem~\ref{theorem:manymap}.

\begin{lemma}
\label{lemma:check_periodic}
Let $(X, S)$ be a dynamical system.
Suppose there is an integer constant $C$ such that $|\Orbit_S(x)|< C$, for all $S$-periodic point $x\in X$.
Then for any $x\in X$, we have
\[
x\text{ is }S\text{-periodic}\iff f^{C!}\circ y = y \text{ for all }f\in S \text{ and for all }y\in \Orbit_S(x)
\]
\end{lemma}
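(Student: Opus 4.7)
The plan is to prove both directions of the biconditional directly by elementary arguments about permutations of finite sets.

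For the ``only if'' direction, I would assume $x$ is $S$-periodic. The hypothesis immediately gives $|\Orbit_S(x)| \leq C-1$, so each $f \in S$ restricts to a permutation of the set $\Orbit_S(x)$ of cardinality at most $C-1$. Any permutation of such a set has order dividing $(C-1)!$, which in turn divides $C!$, so $f^{C!}$ acts as the identity on $\Orbit_S(x)$, yielding $f^{C!}(y) = y$ for every $f \in S$ and every $y \in \Orbit_S(x)$.

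For the ``if'' direction, I would assume $f^{C!}(y) = y$ for all $f \in S$ and all $y \in \Orbit_S(x)$. Since $\Orbit_S(x)$ is forward-invariant under the monoid $\langle S \rangle$, each $f \in S$ restricts to a self-map of $\Orbit_S(x)$; the identity $f^{C!}|_{\Orbit_S(x)} = \mathrm{id}$ then exhibits $f^{C!-1}|_{\Orbit_S(x)}$ as a two-sided inverse of $f|_{\Orbit_S(x)}$, so each $f$ acts as a bijection on $\Orbit_S(x)$. Hence $S$ acts by permutations on $\Orbit_S(x)$, so $x$ is $S$-periodic by definition, and the global hypothesis retroactively forces $|\Orbit_S(x)| < C$.

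I do not anticipate any real technical obstacle: the lemma is essentially combinatorial bookkeeping about permutations of small sets. The mildest subtlety is that the backward direction first produces bijections on $\Orbit_S(x)$ without a priori knowing the orbit is finite, but this already suffices under the paper's convention to declare $x$ an $S$-periodic point, after which the hypothesis immediately supplies finiteness.
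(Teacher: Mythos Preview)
Your proof is correct and follows essentially the same argument as the paper's: both directions hinge on the observation that $f^{C!-1}$ serves as a two-sided inverse for $f$ on $\Orbit_S(x)$, and the forward direction uses that permutations of a set of size at most $C-1$ have order dividing $C!$. Your explicit remark about the finiteness subtlety in the backward direction is a clarification the paper leaves implicit.
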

\begin{proof}
Suppose $x$ is $S$-periodic then every $y\in \Orbit_S(x)$ is $S$-periodic and hence $f^{C!}\circ y=y$ for all $f\in S$.\\
Conversely, suppose $f^{C!}\circ y=y$ for all $f\in S$ and for all $y\in \Orbit_S(x)$.
Fix $f\in S$, we need to show that $f|_{\Orbit_S(x)}:\Orbit_S(x)\to\Orbit_S(x)$ is a bijection.
For $y_1, y_2\in \Orbit_S(x)$, if $f\circ y_1=f\circ y_2$ then $y_1=f^{C!-1}\circ f\circ y_1=f^{C!-1}\circ f\circ y_2=y_2$.
For $y\in \Orbit_S(x)$, we have $f\circ (f^{C!-1} \circ y)=y$. Thus $f$ acts by permutation on $\Orbit_S(x)$.
As $f$ was chosen arbitrarily, $S$ acts by permutation on $\Orbit_S(x)$ and thus $x$ is $S$-periodic.
\end{proof}

\begin{proposition}
\label{prop:periodicequiv}
Let $X$ be a separated scheme of finite presentation over a finitely presented domain $R$ of characteristic zero and $\gm\leq R$ be a maximal ideal. For any collection $S\subseteq \{f\in\End(X/R): \Omega_{f_{R/\gm}}=0\}$ and for any $S$-finite point $x\in X(R)$, we have
\[
x \text{ is $S$-periodic} \iff x \bmod \gm\text{ is $S$-periodic}.
\]
\end{proposition}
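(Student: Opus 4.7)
The forward direction is routine: if each $f \in S$ permutes the finite set $\Orbit_S(x)$, then the reduction map $\Orbit_S(x) \to \Orbit_S(\bar x)$ is surjective by construction, and the induced self-map $f_{R/\gm}$ on the finite set $\Orbit_S(\bar x)$ is therefore surjective and hence bijective.

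For the converse, assume $\bar x$ is $S$-periodic. The plan is to show directly that every $f \in S$ acts bijectively on the finite set $\Orbit_S(x)$; this yields $S$-periodicity of $x$. The central ingredient is the following claim: for every $y \in \Orbit_S(x)$ and every $f \in S$, there exists $b = b(y,f) \geq 1$ with $f^b(y) = y$. Granted the claim, bijectivity of $f|_{\Orbit_S(x)}$ is a formal consequence: surjectivity follows from $y = f(f^{b(y,f)-1}(y))$, and injectivity from the identity $y_1 = f^B(y_1) = f^{B-1}(f(y_2)) = f^B(y_2) = y_2$ whenever $f(y_1) = f(y_2)$, where $B = \operatorname{lcm}(b(y_1,f), b(y_2,f))$.

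To produce the $b$ in the claim, I would apply Proposition~\ref{proposition:key_fixed} to the iterate $f^b$ at the point $y$, where $a \geq 0$ and $b \geq 1$ are the minimal integers with $f^a(y) = f^{a+b}(y)$ (these exist since $\Orbit_f(y) \subseteq \Orbit_S(x)$ is finite). I then need to verify the three hypotheses of the proposition: (i) $y$ is eventually $f^b$-fixed, which is immediate from $f^a(y) = f^{a+b}(y)$; (ii) $f^b_{R/\gm}$ has only unramified fixed $R/\gm$-points, which follows because $\Omega_{f_{R/\gm}} = 0$ propagates to $\Omega_{f^b_{R/\gm}} = 0$ via the cotangent exact sequence for compositions of morphisms; and (iii) $f^b_{R/\gm}(\bar y) = \bar y$, which follows because the assumption that $\bar x$ is $S$-periodic forces $\bar y$ to be purely $f_{R/\gm}$-periodic, and reducing $f^a(y) = f^{a+b}(y)$ modulo $\gm$ shows that the $f_{R/\gm}$-period of $\bar y$ divides $b$.

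The main subtlety I anticipate lies in step (iii): the $f_{R/\gm}$-period of $\bar y$ may be strictly smaller than the $f$-period $b$ of $y$, so one cannot simply copy the period upstairs from below. The plan handles this cleanly by working with the larger integer $b$ extracted from $X(R)$ rather than from the reduction, so that the reduced period automatically divides $b$ and Proposition~\ref{proposition:key_fixed} applies. Everything beyond this is a direct application of that proposition together with the closure of formally unramified morphisms under composition.
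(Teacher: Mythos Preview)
Your argument is correct and hinges on the same key tool as the paper, namely Proposition~\ref{proposition:key_fixed}. The difference lies in the exponent you feed into it. The paper invokes Whang's universal bound $C$ on periodic orbit sizes (Theorem~\ref{junhowhang}), verifies that every $y\in\Orbit_S(x)$ is eventually $f^{C!}$-fixed and that $\bar y$ is $f^{C!}$-fixed, and then concludes via Lemma~\ref{lemma:check_periodic}. You instead extract the eventual $f$-period $b=b(y,f)$ of $y$ directly from the finite orbit $\Orbit_S(x)$ and apply the proposition to $f^{b}$; the verification that $f^{b}_{R/\gm}(\bar y)=\bar y$ then follows, as you note, from periodicity of $\bar y$ together with the reduction of $f^{a}(y)=f^{a+b}(y)$. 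Your route is slightly more elementary in that it makes this particular proposition independent of Theorem~\ref{junhowhang}; the paper's route has the cosmetic advantage of a single uniform exponent $C!$, which lets it quote Lemma~\ref{lemma:check_periodic} directly rather than reproving the bijectivity step as you do.
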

\begin{proof}
Let us denote $x$ mod $\gm$ as $\bar{x}$, which is the composition of $\Spec(R/\gm)\to\Spec(R)$ and $x:\Spec(R)\to X$.
If $x$ is $S$-periodic then clearly $\bar{x}$ is $S$-periodic. Conversely, suppose $\bar{x}$ is $S$-periodic.
Let $C=C(X,R)$ be the universal upper bound on the size of periodic orbits in Theorem~\ref{junhowhang}.
Fix $y\in \Orbit_S(x)$ and fix $f\in S$.
Since $x$ is $S$ finite, we have $|\Orbit_f(y)|\leq |\Orbit_S(x)|<\infty$, so $y$ is $f$-preperiodic.
Then $y$ is eventually $f^{C!}$-fixed point.
Since $\bar{x}$ is $S$-periodic, any iteration of $\bar{x}$ is also $S$-periodic.
Hence, $\bar{y}:= (y \bmod \gm)$ is $S$-periodic and we have $f^{C!}\circ \bar{y}=\bar{y}$. Thus, by Proposition~\ref{proposition:key_fixed}, $y$ is $f^{C!}$-fixed point. Since $y$ and $f$ was chosen arbitrarily, by Lemma~\ref{lemma:check_periodic}, we conclude that $x$ is $S$-periodic.
\end{proof}

\begin{definition}
Let $(X,S)$ be a dynamical system.
For $x\in X$, consider the following collection of subsets of $\Orbit_S(x)$, 
\[
\Pcal_S(x):=\{ P\subseteq \Orbit_S(x) : \forall y_1, y_2 \in P, \exists w\in\langle S\rangle \text{ s.t. }w(y_1)=y_2 \text{ or }w(y_2)=y_1 \}.
\]
We shall call the elements $P$ of $\Pcal_S(x)$ a \emph{path of $\Orbit_S(x)$} or simply a $\emph{path}$.
\end{definition}

\begin{theorem}
\label{theorem:pathbound}
Let $X$ be a separated scheme of finite presentation over a finitely presented domain $R$ of characteristic zero. Let $\gm \leq R$ be a maximal ideal.
For any subset
\[
S\subseteq \{ f\in \End(X/R) : \Omega_{f_{R/\gm}}=0\},
\]
possibly infinite, there exists an effective constant $C=C(X,R)$ such that,
\[
|P|\leq C|X(R/\gm)|
\]
for all $S$-finite $x\in X(R)$ and for all $P\in \Pcal=\Pcal_S(x)$.
\end{theorem}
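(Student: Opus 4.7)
The plan is to bound, for each $\bar p \in X(R/\gm)$, the size of the fiber
\[
P_{\bar p} := \{y \in P : \bar y = \bar p\}
\]
by Whang's universal constant $C = C(X,R)$ from Theorem~\ref{junhowhang}. Summing $|P| = \sum_{\bar p} |P_{\bar p}|$ over the image then yields the required $|P| \leq C|X(R/\gm)|$.

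Fix $\bar p \in X(R/\gm)$ and enumerate $P_{\bar p} = \{y_1, \ldots, y_k\}$. The core step is to construct, for each $i$, an element $w_i \in \langle S \rangle$ satisfying $w_i(y_1) = y_i$ and $\bar w_i(\bar p) = \bar p$. The path condition yields, for each pair $(y_1, y_i)$, either $w(y_1) = y_i$ or $w'(y_i) = y_1$ for some element of $\langle S \rangle$. In the first case set $w_i := w$; note $\bar w$ automatically fixes $\bar p$ since $\bar y_1 = \bar y_i = \bar p$. In the second case, the same observation shows $\bar w'$ fixes $\bar p$, so $\bar y_i$ is $\{w'\}$-periodic. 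Since $y_i$ is $\{w'\}$-finite (because $\Orbit_{w'}(y_i) \subseteq \Orbit_S(x)$), Proposition~\ref{prop:periodicequiv} lifts this to $y_i$ being $w'$-periodic of some minimal period $m \geq 1$; then $(w')^{m-1}(y_1) = (w')^m(y_i) = y_i$, so $w_i := (w')^{m-1} \in \langle S \rangle$ does the job.

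With these $w_i$ in hand, set $T := \{w_1, \ldots, w_k\}$. Because compositions of morphisms unramified modulo $\gm$ are again unramified modulo $\gm$, we have $T \subseteq \{f \in \End(X/R) : \Omega_{f_{R/\gm}} = 0\}$. By construction each $\bar w_i$ fixes $\bar y_1 = \bar p$, so $\bar y_1$ is $T$-fixed, hence certainly $T$-periodic, while $y_1$ is $T$-finite since $\Orbit_T(y_1) \subseteq \Orbit_S(x)$. Proposition~\ref{prop:periodicequiv} then promotes the periodicity of $\bar y_1$ to $y_1$ itself, and Theorem~\ref{junhowhang} gives $|\Orbit_T(y_1)| \leq C$. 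Since $\{y_1, \ldots, y_k\} = \{w_i(y_1) : 1 \le i \le k\} \subseteq \Orbit_T(y_1)$, we conclude $k \leq C$.

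The main obstacle is the built-in asymmetry in the definition of a path: one only knows $y_1 \preceq y_i$ \emph{or} $y_i \preceq y_1$ in the preorder generated by the action of $\langle S \rangle$, never a priori both. The resolution exploits exactly the situation where two elements of $P$ share a mod-$\gm$ reduction: this suffices, via Proposition~\ref{prop:periodicequiv}, to force the connecting map to act periodically on $y_i$, which then lets us invert any "backward" arrow into a "forward" one and assemble the finite set $T$ to which Whang's uniform bound applies.
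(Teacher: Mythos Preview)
Your proof is correct and follows essentially the same strategy as the paper: bound each fiber of the reduction map $P \to X(R/\gm)$ by Whang's constant $C$, then sum. The paper does this by introducing the full submonoid $S_\alpha = \{w \in \langle S\rangle \setminus \mathrm{id} : \bar w(\alpha)=\alpha\}$, showing every element of the fiber is $S_\alpha$-periodic, and then that the fiber is a single $S_\alpha$-orbit; you instead construct a finite set $T=\{w_1,\dots,w_k\}$ tailored to the fiber and apply Whang's bound directly to $\Orbit_T(y_1)$. Your arrow-reversal step, turning $w'(y_i)=y_1$ into $(w')^{m-1}(y_1)=y_i$ via the $w'$-periodicity of $y_i$, is a slightly more direct version of the paper's permutation argument in its Step~3; both routes ultimately rest on Proposition~\ref{prop:periodicequiv} and Theorem~\ref{junhowhang} in the same way.
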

\begin{proof}
Let $x\in X(R)$ be a $S$-finite point and let $P\in \Pcal=\Pcal_S(x)$.
Let 
\[
\pi:P\to \bar{P}:=\{ (y \bmod \gm)\in X(R/\gm) : y\in P \}
\]
be the projection map modulo $\gm$.
For $\alpha\in \bar{P}$, define $S_\alpha:= \{ w\in \langle S \rangle \setminus id : w(\alpha)=\alpha\bmod \gm\}$ to be the collection of maps in $\langle S \rangle$ that fixes $\alpha$ modulo $\gm$, where $id$ is the identity map. Let $C=C(X,R)$ be the universal upper bound on the size of periodic orbits in Theorem~\ref{junhowhang}.\\
\textbf{Step 1}: We will show that if $S_\alpha=\emptyset$ then $|\pi^{-1}(\alpha)|=1$.\\
Suppose we have two distinct $y_1,y_2\in \pi^{-1}(\alpha)$, then there exists $w\in \langle S\rangle$ such that $w(y_1)=y_2$ or $w(y_2)=y_1$. Hence $w\in S_\alpha\neq\emptyset$.\\
\textbf{Step 2}:
We will show that if $S_\alpha\neq\emptyset$ then each $y\in \pi^{-1}(\alpha)$ is $S_\alpha$-periodic.\\
Fix $y\in \pi^{-1}(\alpha)$. 
Let $\f\in S_\alpha$ and $z\in \Orbit_{S_\alpha}(y)$.
Then $|\Orbit_\f(z)|\leq |\Orbit_S(x)|<\infty$, so $z$ is $\f$-preperiodic.
Since $z$ mod $\gm$ is fixed point of $\f$, by Proposition~\ref{prop:periodicequiv}, it follows that $z$ is $\f$-periodic and hence $\f^{C!}\circ z=z$.
Since $\f\in S_\alpha$ and $z\in \Orbit_{S_\alpha}(y)$ was chosen arbitrarily, by Lemma~\ref{lemma:check_periodic}, $y$ is $S_\alpha$-periodic.\\
\textbf{Step 3:}
We will show that if $S_\alpha\neq\emptyset$ then $\pi^{-1}(\alpha)$ is a single $S_\alpha$-periodic orbit.\\
For $y_1, y_2\in \pi^{-1}(\alpha)$, there exists $w\in \langle S\rangle$ such that $w(y_1)=y_2$ or $w(y_2)=y_1$.
Assume $w(y_1)=y_2$, then $w\in S_\alpha$ hence $\Orbit_{S_\alpha}(y_2)\subseteq \Orbit_{S_\alpha}(y_1)$.
Since $w$ acts by permutation on $\Orbit_{S_\alpha}(y_2)$, there exists $z\in\Orbit_{S_{\alpha}}(y_2)$ such that $w(z)=y_2$. $w$ also acts by permutation on $\Orbit_{S_\alpha}(y_1)$ and since $w(y_1)=y_2$, we have $y_1=z\in\Orbit_{S_\alpha}(y_2)$. Thus $\Orbit_{S_\alpha}(y_1)=\Orbit_{S_\alpha}(y_2)=\pi^{-1}(\alpha)$.\\
\textbf{Step 4:}
We will prove our desired upper bound on the size of paths.\\
By Step 1 and Step 3, it follows that
\[
|\pi^{-1}(\alpha)|\leq C.
\]
Thus we obtain our bound,
\[
|P| = \sum_{\alpha\in\bar{P}}|\pi^{-1}(\alpha)|\leq C|\bar{P}|\leq C|X(R/\gm)|.
\]
\end{proof}

We will now define some notations and prove several auxiliary lemmas (Lemma~\ref{lemma:lvlincrease}, Lemma~\ref{lemma:lvl2}, and Lemma~\ref{lemma:lvl3}) required to prove our main theorem, Theorem~\ref{theorem:manymap}.

\begin{definition}Let $(X,S)$ be a dynamical system and let $x\in X$. Let $\Pcal=\Pcal_S(x)$.
\begin{parts}
\Part{(i)}
For each $y\in \Orbit_S(x)$ define
\[
\Pcal_y := \{ P\in \Pcal : y\in P\}.
\]
\Part{(ii)}
For $y_1, y_2 \in \Orbit_S(x)$, define
\[
\Pcal_{y_1,y_2}:=\{ P\in \Pcal_{y_1}\cap \Pcal_{y_2} : \forall y\in P, \exists w_1, w_2\in \langle S \rangle \text{s.t. }w_1(y_1)=y, w_2(y)=y_2\}.
\]
\end{parts}
\end{definition}

\begin{definition}
Let $(X,S)$ be a dynamical system and let $x\in X$ be a $S$-finite point. Let $\Pcal=\Pcal_S(x)$. Given $y\in \Orbit_S(x)$, the set $\Pcal_{x,y}$ is partially ordered by inclusion. For each $y\in\Orbit_S(x)$, we shall define the following number
\[
\mathrm{lvl}(y):= \max(|P|: P \text{ is maximal in }\Pcal_{x,y})
\]
and we will call it the \emph{level of $y$}.
\end{definition}

\begin{lemma}
\label{lemma:lvlincrease}
Let $(X,S)$ be a dynamical system and let $x\in X$ be an $S$-finite point. For $P\in \Pcal=\Pcal_S(x)$ we have
\[
\mathrm{lvl}(y)\leq \mathrm{lvl}(w(y))
\]
for all $y\in P$ and for all $w\in \langle S \rangle$.
\end{lemma}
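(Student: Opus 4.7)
The plan is to fix $y\in P\subseteq \Orbit_S(x)$ and $w\in\langle S\rangle$, select a maximal $Q\in\Pcal_{x,y}$ realizing $|Q|=\mathrm{lvl}(y)$, and manufacture from it a path in $\Pcal_{x,w(y)}$ of size at least $|Q|$. The natural candidate is
\[
Q' \;:=\; Q\cup\{w(y)\}.
\]
Once the membership $Q'\in\Pcal_{x,w(y)}$ is established, I extend $Q'$ to a maximal element $\tilde Q$ of the finite poset $\Pcal_{x,w(y)}$ (possible since $\Orbit_S(x)$ is finite), and the chain
\[
\mathrm{lvl}(w(y))\;\geq\;|\tilde Q|\;\geq\;|Q'|\;\geq\;|Q|\;=\;\mathrm{lvl}(y)
\]
delivers the inequality.

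The substance of the proof is verifying the three conditions defining $\Pcal_{x,w(y)}$: that $Q'$ is a path, that every element of $Q'$ is reached from $x$ by a map in $\langle S\rangle$, and that every element reaches $w(y)$ by such a map. The key observation is that post-composition with $w$ transports the witnesses of $\Pcal_{x,y}$ into witnesses of $\Pcal_{x,w(y)}$. Concretely, for each $z\in Q$ there exist $u_z,v_z\in\langle S\rangle$ with $u_z(x)=z$ and $v_z(z)=y$; then $(w\circ v_z)(z)=w(y)$. This single composition simultaneously supplies the path relation between $z$ and $w(y)$ and the "$z\to w(y)$" half of the $\Pcal_{x,w(y)}$ condition, while the "$x\to z$" half is inherited from $Q$. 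For the new element $w(y)$ itself, the identity in $\langle S\rangle$ is its reflexive "$\to w(y)$" witness, and composing $w$ with any $\langle S\rangle$-map sending $x$ to $y$ (which exists because $y\in Q\in\Pcal_{x,y}$) yields the required "$x\to w(y)$" witness.

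There is no serious obstacle in the argument; the only mild points of care are the finiteness of $\Orbit_S(x)$, invoked to extend $Q'$ to a maximal element of $\Pcal_{x,w(y)}$, and the fact that $\langle S\rangle$, being the monoid generated by $S$, contains the identity map. The lemma is thus a direct unwinding of the definitions, with the conceptual content concentrated in the simple idea that applying $w$ turns witnesses ending at $y$ into witnesses ending at $w(y)$.
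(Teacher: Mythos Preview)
Your proof is correct and follows essentially the same approach as the paper's: both take a maximal $Q\in\Pcal_{x,y}$ with $|Q|=\mathrm{lvl}(y)$ and enlarge it to an element of $\Pcal_{x,w(y)}$; the paper phrases this as $Q\cup Q'$ for any $Q'\in\Pcal_{y,w(y)}$, while you make the specific choice $Q'=\{w(y)\}$ and spell out in detail the verification that $Q\cup\{w(y)\}\in\Pcal_{x,w(y)}$. Your treatment is more explicit (including the extension to a maximal element, which the paper leaves implicit), but the underlying idea is identical.
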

\begin{proof}
Let $Q\in \Pcal_{x,y}$ such that $|Q|=\mathrm{lvl}(y)$. For any $Q'\in \Pcal_{y, w(y)}$ note that $Q\cup Q'\in \Pcal_{x, w(y)}$ and we have
\[
\mathrm{lvl}(y)=|Q|\leq |Q\cup Q'|\leq \mathrm{lvl}(w(y)).
\]
\end{proof}

\begin{lemma}
\label{lemma:lvl2}
Let $(X,S)$ be a dynamical system and an $S$-finite point $x\in X$. Let $y\in \Orbit_S(x)$. For each $P\in \Pcal_{y}$  we have
\[
\{z\in P : \mathrm{lvl}(z)=\mathrm{lvl}(y)\}\in \Pcal_{y,y}.
\]
\end{lemma}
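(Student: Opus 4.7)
The plan is to set $T := \{z \in P : \mathrm{lvl}(z) = \mathrm{lvl}(y)\}$ and verify directly the defining conditions for $T \in \Pcal_{y,y}$. Since $y \in T$ trivially and the path axiom for $T$ follows by composition from the required two-sided reachability, the entire lemma reduces to producing, for every $z \in T$, elements $w_1, w_2 \in \langle S \rangle$ with $w_1(y) = z$ and $w_2(z) = y$.

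For a fixed $z \in T \setminus \{y\}$, the path property of $P$ yields $w \in \langle S \rangle$ with $w(y) = z$ or $w(z) = y$. I would handle the two cases symmetrically by the same squeeze. Taking the case $w(y) = z$, I pick a maximal $Q \in \Pcal_{x,y}$ with $|Q| = \mathrm{lvl}(y)$ and argue that $Q \cup \{z\} \in \Pcal_{x,z}$: for each $q \in Q$ the forward witness $u_1 \in \langle S \rangle$ with $u_1(x) = q$ is already present, and composing $w$ with a backward witness $u_2(q) = y$ gives $(w \circ u_2)(q) = z$; the element $z$ itself is reached from $x$ via $w$ applied to a witness of $y$, and reaches itself via the identity of $\langle S \rangle$. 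Consequently
\[
|Q \cup \{z\}| \le \mathrm{lvl}(z) = \mathrm{lvl}(y) = |Q|,
\]
which forces $z \in Q$. Applying the defining property of $Q \in \Pcal_{x,y}$ at $z$ produces the desired $w_2 \in \langle S \rangle$ with $w_2(z) = y$, while $w_1 := w$ supplies the forward arrow. The case $w(z) = y$ is handled symmetrically: enlarge a maximal $Q' \in \Pcal_{x,z}$ with $|Q'| = \mathrm{lvl}(z)$ by $y$, check $Q' \cup \{y\} \in \Pcal_{x,y}$ by composing $w$ with backward witnesses into $z$, squeeze to obtain $y \in Q'$, and extract $w_1 \in \langle S \rangle$ with $w_1(y) = z$ from the definition of $\Pcal_{x,z}$.

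I expect the main obstacle to be purely notational: keeping straight which base pair $(y_1, y_2)$ indexes each $\Pcal_{y_1, y_2}$ while manipulating $Q$ and $Q \cup \{z\}$, and remembering that the identity of $\langle S \rangle$ is needed to verify the conditions at the newly adjoined point. The decisive ingredient is the level equality $\mathrm{lvl}(y) = \mathrm{lvl}(z)$ built into the definition of $T$; without it the squeeze forcing $z \in Q$ collapses, which is precisely why the statement restricts to elements of equal level. Note that Lemma~\ref{lemma:lvlincrease} enters implicitly as a sanity check, guaranteeing that the level is monotone along the arrow $w$ and hence compatible with the hypothesis $\mathrm{lvl}(z) = \mathrm{lvl}(y)$.
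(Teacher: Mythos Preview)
Your proposal is correct and follows essentially the same route as the paper's proof: both arguments fix $z\in P$ with $\mathrm{lvl}(z)=\mathrm{lvl}(y)$, use the path property of $P$ to obtain an arrow in one direction, adjoin the target to a level-realizing element of $\Pcal_{x,y}$ (respectively $\Pcal_{x,z}$), and use the squeeze $|Q|\le |Q\cup\{z\}|\le \mathrm{lvl}(z)=|Q|$ to force membership and extract the reverse arrow. The only cosmetic difference is that the paper deduces the path property of $T$ directly from the observation that any subset of a path is a path, whereas you recover it afterwards from the two-sided reachability; both are valid.
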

\begin{proof}
Fix $P\in \Pcal_y$. Note that any subset of a path $P$ is a path and $y\in \{z\in P : \mathrm{lvl}(z)=\mathrm{lvl}(y)\}$, so $\{z\in P : \mathrm{lvl}(z)=\mathrm{lvl}(y)\}\in \Pcal_y$.
Let $z\in P$ such that $\mathrm{lvl}(z)=\mathrm{lvl}(y)$. 
Let $Q_1\in \Pcal_{x, y}$ such that $|Q_1|=\mathrm{lvl}(y)$ and let $Q_2\in \Pcal_{x, z}$ such that $|Q_2|=\mathrm{lvl}(z)$.
There exist $w_1\in \langle S \rangle$ such that $w_1(y)=z$ or $w_1(z)=y$. Assume $w_1(y)=z$. Then we have
\[
|Q_1|\leq |Q_1\cup \{z\}|\leq |Q_2|=|Q_1|.
\]
Hence $z\in Q_1$, that is there is $w_2\in \langle S\rangle$ such that $w_2(z)=y$. Assume $w_1(z)=y$ then similarly,
\[
|Q_2|\leq |Q_2\cup \{y\}|\leq |Q_1|=|Q_2|.
\]
Hence $y\in Q_2$ that is there is $w_3\in \langle S\rangle$ such that $w_3(y)=z$. Therefore, 
\[
\{z\in P : \mathrm{lvl}(z)=\mathrm{lvl}(y)\}\in \Pcal_{y,y}.
\]
\end{proof}

\begin{lemma}
\label{lemma:lvl3}
Let $(X, S)$ be a dynamical system and an $S$-finite point $x\in X$. Let $y\in \Orbit_S(x)$. Then $\Pcal_{y,y}$ is closed under arbitrary union, that is for any collection $\{P_i\}_{i\in I} \subseteq P_{y,y}$ we have
\[
\bigcup_{i\in I} P_i \in \Pcal_{y,y}.
\]
\end{lemma}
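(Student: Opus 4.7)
The plan is to verify the three defining conditions of membership in $\Pcal_{y,y}$ for the union $P := \bigcup_{i\in I} P_i$, namely: (a) $P$ is a path, (b) $y \in P$, and (c) for every $z \in P$ there exist $w_1, w_2 \in \langle S \rangle$ with $w_1(y) = z$ and $w_2(z) = y$.

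Conditions (b) and (c) follow immediately from the hypothesis. Since $y \in P_i$ for every $i \in I$, we have $y \in P$. For (c), given any $z \in P$, pick an index $j$ with $z \in P_j$; since $P_j \in \Pcal_{y,y}$, the definition of $\Pcal_{y,y}$ directly supplies the required $w_1, w_2 \in \langle S \rangle$ with $w_1(y) = z$ and $w_2(z) = y$.

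The only nontrivial step is (a), but even this reduces to a single composition. Given $z_1, z_2 \in P$, choose indices $i_1, i_2$ with $z_k \in P_{i_k}$. Applying the $\Pcal_{y,y}$ hypothesis to $P_{i_1}$ yields $u \in \langle S \rangle$ with $u(z_1) = y$, and applying it to $P_{i_2}$ yields $v \in \langle S \rangle$ with $v(y) = z_2$. Since $\langle S \rangle$ is a monoid under composition, $v \circ u \in \langle S \rangle$, and $(v \circ u)(z_1) = z_2$. Thus the path condition is satisfied. There is no real obstacle here; the point of the lemma is precisely that the "hub structure" around $y$ built into the definition of $\Pcal_{y,y}$ is preserved under taking arbitrary unions, because any two elements of the union can be linked via $y$ by composing two half-maps.
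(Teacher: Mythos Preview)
Your proof is correct and follows essentially the same approach as the paper: both verify the path condition by composing through the hub $y$, using $u$ with $u(z_1)=y$ from one $P_{i_1}\in\Pcal_{y,y}$ and $v$ with $v(y)=z_2$ from another $P_{i_2}\in\Pcal_{y,y}$ to obtain $(v\circ u)(z_1)=z_2$. The paper simply dispatches your conditions (b) and (c) with the sentence ``Then it is clear that it is an element of $\Pcal_{y,y}$,'' whereas you spell them out explicitly, but the substance is identical.
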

\begin{proof}
Let $z_1,z_2\in \bigcup_{i\in I} P_i$. Assume, $z_1\in P_i$ and $z_2\in P_j$ for some $i,j\in I$. There exist $w_1, w_2\in \langle S \rangle$ such that $w_1(z_1)=y$ and $w_2(y)=z_2$. Hence $(w_2\circ w_1)(z_1)=z_2$, and it follows that $\bigcup_{i\in I} P_i$ is a path in $\Pcal_y$. Then it is clear that it is an element of $\Pcal_{y,y}$.
\end{proof}

\vspace{3mm}

\begin{proof}[Proof of Theorem~\ref{theorem:manymap}]
Let $C=C(X,R)$ be the universal upper bound on the size of periodic orbits in Theorem~\ref{junhowhang}.
Let $B:= C|X(R/\gm)|$ be upper bound on the size of paths in Theorem~\ref{theorem:pathbound}.
For an integer $t\geq 1$, define the following which is the set of all level $t$ elements in $\Orbit_S(x)$
\[
L(t)=\{ z\in \Orbit_S(x) : \mathrm{lvl}(z)=t \}.
\]
By Theorem~\ref{theorem:pathbound}, for $t>B$ we have $L(t)=\emptyset$.
Hence
\begin{equation}
\label{equation:partition}
    \Orbit_S(x) = \bigcup_{t=1}^B L(t).
\end{equation}
We will give an upper bound of $|L(t)|$ for each $t$, and hence obtain our upper bound for $|\Orbit_S(x)|$.\\
\textbf{Step 1:} We will show that $|L(1)|\leq 1$.\\
If $\mathrm{lvl}(x)>1$ then by Lemma~\ref{lemma:lvlincrease}, $L(1)=\emptyset$. If $\mathrm{lvl}(x)=1$ then, for any $P\in \Pcal_x$, by Lemma~\ref{lemma:lvl2}, $P\cap L(1)\in\Pcal_{x,x}$. Hence $|P\cap L(1)|\leq \mathrm{lvl}(x)=1$ and it follows that $P\cap L(1)=\{x\}$. Thus 
\[
L(1) = \bigcup_{P\in \Pcal_x} P\cap L(1) = \{x\}
\]
and $|L(1)|=1$.\\
\textbf{Step 2:} In this step, we will show that $|L(t)|\leq B|S|\sum^{t-1}_{i=1}|L(i)|$, for $t\geq 2$. We proceed by considering two cases: when at least one of the sets $L(1), \cdots, L(t-1)$ is nonempty, and when all of them are empty.\\
\textbf{Step 2-1:}  Suppose that all $L(1), \cdots, L(t-1)$ are empty sets.
If $\mathrm{lvl}(x)>t$ then, by Lemma~\ref{lemma:lvlincrease}, $|L(t)|=0$, so assume that $\mathrm{lvl}(x)=t$.
By Lemma~\ref{lemma:lvl2}, we see that the following is a path
\[
L(t) = \bigcup_{P\in \Pcal_{x}} \{z \in P : \mathrm{lvl}(z)=t\}\in \Pcal_{x,x}.
\]
Hence, $L(t)$ is a path, and by Theorem~\ref{theorem:pathbound}, $|L(t)|\leq B$.\\
\textbf{Step 2-2:}
Suppose at least one of $L(1), \cdots, L(t-1)$ is nonempty.
Then we can write our set $L(t)$ as follows
\begin{equation}
\label{equation:1}
    L(t) = \bigcup_{i=1}^{t-1}\bigcup_{\substack{y\in \Orbit_S(x)\\ \mathrm{lvl}(y)=i}}\bigcup_{\substack{f\in S \\ \mathrm{lvl}(f(y))=t}} \{z\in\Orbit_S(f(y)) : \mathrm{lvl}(z)=t\},
\end{equation}
the second union is taken over all $y\in \Orbit_S(x)$ such that $\mathrm{lvl}(y)=i$ and the third union is taken over all $f\in S$ such that $\mathrm{lvl}(f(y))=t$. 
By Lemma~\ref{lemma:lvl2} and Lemma~\ref{lemma:lvl3}, we see that the following is a path
\[
\{z\in \Orbit_S(f(y)) : \mathrm{lvl}(z)=t\} = \bigcup_{P\in \Pcal_{f(y)}} \{z \in P : \mathrm{lvl}(z)=t\}\in \Pcal_{f(y),f(y)}.
\]
Then by Theorem~\ref{theorem:pathbound}, we have
\[
|\{z\in \Orbit_S(f(y)) : \mathrm{lvl}(z)=t\}|\leq B.
\]
From (\ref{equation:1}), we obtain the following inequality
\begin{align*}
L(t) &\leq \sum_{i=1}^{t-1}\sum_{\substack{y\in \Orbit_S(x)\\ \mathrm{lvl}(y)=i}}\sum_{\substack{f\in S \\ \mathrm{lvl}(f(y))=t}} | \{z\in\Orbit_S(f(y)) : \mathrm{lvl}(z)=t\} |\\
&\leq \sum_{i=1}^{t-1} L(i)|S|B.
\end{align*}
Combining with \textbf{Step 2-1} we have
\[
|L(t)|\leq \max( B|S|\sum^{t-1}_{i=1}|L(i)|, B)\leq B|S|\sum^{t-1}_{i=1}|L(i)|.
\]
\textbf{Step 3:} We will show that there is an upper bound on the size of $|\Orbit_S(x)|$.\\
For $t\geq 2$, by \textbf{Step 2}, we have
\[
\sum^{t}_{i=1}|L(t)|-\sum^{t-1}_{i=1}|L(t)| =|L(t)|\leq B|S|\sum^{t-1}_{i=1}|L(i)|
\]
and
\[
\sum^{t}_{i=1}|L(t)|\leq (B|S|+1)\sum^{t-1}_{i=1}|L(t)|.
\]
Thus,
\[
\sum^{t}_{i=1}|L(t)|\leq (B|S|+1)^{t-1}|L(1)|\leq (B|S|+1)^{t-1}.
\]
Finally, we obtain our desired bound for $|\Orbit_S(x)|$,
\begin{align*}
|\Orbit_S(x)| &= \bigcup_{t=1}^B |L(t)|\\
&=\sum^B_{t=1}|L(t)|\\
&\leq (B|S|+1)^{B-1}\\
&=(C|X(R/\gm)||S|+1)^{C|X(R/\gm)|-1}.
\end{align*}
\end{proof}

\begin{example}
\label{example:affine unramified}
Let $X=\AA^n_\ZZ=\Spec(\ZZ[x_1, \ldots, x_n])$. Consider $f:X\to X$, induced by ring homomorphisms of the form
\begin{align*}
    \ZZ[x_1, \ldots, x_n] &\to \ZZ[x_1, \ldots, x_n]\\
    x_i &\mapsto x_i + p\cdot g_i(x_1, \ldots, x_n) + h_i(x_1^p, \ldots, x_n^p)
\end{align*}
where $g_i, h_i \in \ZZ[x_1, \ldots, x_n]$.
Then $f_{\FF_p}$ is unramified. Hence, we can apply Theorem~\ref{theorem:manymap} to a finite collection of such maps.
\end{example}

\begin{example}
For an integer matrix $A=(a_{ij})\in M_{n}(\ZZ)$, associate a monomial map $f_A :\GG^n_m \to \GG^n_m$ defined by
\[
f_A(x_1, \cdots, x_n) = (x^{a_{11}}_1x^{a_{12}}_2\cdots x^{a_{1n}}_n, \ldots, x^{a_{n1}}_1x^{a_{n2}}_2\cdots x^{a_{nn}}_n).
\]
Let $n$, $N$ be positive integers and let $p$ be a prime integer such that $p\nmid N$. Let
\[
S\subseteq \{f_A : A\in M_n(\ZZ) \text{ such that } p\nmid \det(A) \}
\] be a finite subset.
Note that $(f_A)_{\FF_p}$ is unramified if and only if $\det(A)\not\equiv 0 \bmod p$.
By Theorem~\ref{theorem:manymap}, there exists a constant $C=C(n, N)$ such that,
\[
|\Orbit_S(x)|< (C\cdot(p-1)^n|S|+1)^{C\cdot(p-1)^n-1},
\]
for all $S$-finite $x\in \GG^n_m(\ZZ[1/N])$. 
\end{example}

For a dynamical system $(X,S)$, we have bounded the size of finite $S$-orbits in Theorem~\ref{theorem:singlemap}, Theorem~\ref{theorem:singlemap2} and Theorem~\ref{theorem:manymap}.
In certain cases, we can bound the total number of $S$-finite points. We shall denote the set of all $S$-finite points by
\[
\mathrm{Fin}(X,S):=\{ x\in X : x \text{ is $S$-finite}\}=\bigcup_{\substack{x\in X \\ x \text{ is $S$-finite}}}\Orbit_S(x).
\]

\begin{proposition}
\label{proposition:projective}
Let $X$ be an irreducible normal projective variety over a number field $K$.
Let $S\subseteq \End(X/K)$ be any collection of endomorphisms of $X$ such that, $S$ contains at least one map $f$ with the following properties\\
(i) $\Omega_f=0$\\
(ii) $|\Per(X(K),f)|<\infty$\\
Then we have
\[
|\mathrm{Fin}(X(K), S)|\leq |\Per(X(K),f)|\cdot \deg(f)^{|\Xcal(\kappa(\gp))|},
\]
where $\Xcal$ is a projective model of $X$ and $\kappa(\gp)$ is the residue field of some prime $\gp$ of $K$.
\end{proposition}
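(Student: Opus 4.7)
The plan is to reduce the total count of $S$-finite points to a count of $f$-preperiodic points, to apply the bijection argument from the proof of Theorem~\ref{theorem:singlemap} to control the length of the preperiodic tail, and then to count preimages using that $f$ is finite of degree $\deg(f)$. Since every $S$-finite point $x\in X(K)$ satisfies $\Orbit_f(x)\subseteq \Orbit_S(x)$, it is automatically $f$-preperiodic, so $\mathrm{Fin}(X(K),S)\subseteq \PrePer(X(K),f)$ and it suffices to bound the latter.

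First I would set up an integral model. Let $\Xcal$ be a projective model of $X$ over $\Ocal_K$; by the valuative criterion of properness, $X(K)=\Xcal(\Ocal_K)$. Spreading out the graph of $f$ and applying Lemma~\ref{lemma:spreading} to the hypothesis $\Omega_f=0$, one can choose $\ell\in \Ocal_K$ so that $f$ extends to an unramified endomorphism of $\Xcal_{\Ocal_K[1/\ell]}$. Picking any prime $\gp$ of $\Ocal_K$ coprime to $\ell$, the reduction $f_{\kappa(\gp)}$ is then unramified, and in particular at each of its periodic $\kappa(\gp)$-points.

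With this setup, Proposition~\ref{proposition:key_fixed} applies over $R=\Ocal_K$ at the ideal $\gm=\gp$, and the bijection step from the proof of Theorem~\ref{theorem:singlemap} carries over verbatim. For any $f$-preperiodic $x\in X(K)=\Xcal(\Ocal_K)$, pick $n\geq 0$ and $m>0$ minimal with $f^n(x)=f^{n+m}(x)$; reduction modulo $\gp$ then restricts to a bijection from the tail $\{f^i(x):0\leq i<n\}$ onto its image in $\Xcal(\kappa(\gp))$, whence $n\leq N:=|\Xcal(\kappa(\gp))|$. In particular, $f^N(x)\in \Per(X(K),f)$ for every preperiodic $x$, so
\[
\PrePer(X(K),f)\subseteq f^{-N}\bigl(\Per(X(K),f)\bigr).
\]

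Finally I would control the preimage count. Since $X$ is projective and $X\to\Spec K$ is separated, $f:X\to X$ is proper; since $\Omega_f=0$, $f$ has discrete fibers and is quasi-finite, hence finite. Irreducibility of $X$ together with unramifiedness forces $\dim f(X)=\dim X$, so $f$ is surjective and $\deg(f)$ is well defined. The standard fiber bound $|f^{-1}(y)(\Kbar)|\leq \deg(f)$ for finite morphisms then yields $|f^{-N}(y)|\leq \deg(f)^N$ by iteration, and summing over $y\in\Per(X(K),f)$ produces
\[
|\mathrm{Fin}(X(K),S)|\leq |\Per(X(K),f)|\cdot \deg(f)^{|\Xcal(\kappa(\gp))|}.
\]
The main technical hurdle is the spreading-out step---ensuring $f_{\kappa(\gp)}$ is unramified for the chosen prime---after which the bijection argument of Theorem~\ref{theorem:singlemap} and the elementary degree count assemble the bound directly.
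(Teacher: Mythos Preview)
Your proof is correct and follows essentially the same route as the paper: spread out to an unramified integral model, invoke the tail-length bijection from the proof of Theorem~\ref{theorem:singlemap} to get $n\le |\Xcal(\kappa(\gp))|$, and then count preimages using that $f$ is finite of degree $\deg(f)$. The only cosmetic slip is that Proposition~\ref{proposition:key_fixed} should be applied with $R=\Ocal_K[1/\ell]$ rather than $\Ocal_K$, since that is where the extension of $f$ lives; with that adjustment your argument matches the paper's almost line for line.
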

\begin{proof}
Since $f$ is unramified and quasi-compact, it is quasi-finite.
As $f$ is proper and quasi-finite, it follows that $f$ is finite.
Finite morphisms preserve dimensions, that is $\dim X=\dim f(X)$, since $X$ is irreducible, $f$ is dominant.
For a dominant finite morphism, we can define the degree $\deg(f)=[K(X):f^*K(X)]<\infty$.
Let $\Ocal_K$ be the ring of integers of $K$.
By spreading out (see Lemma~\ref{lemma:spreading}), we have a model $\Xcal$ over $\Spec(\Ocal_K[1/\ell])$ of $X$ and an unramified morphism $\tilde{f}:\Xcal\to\Xcal$ that extends $f$.
Since unramified morphisms are stable under base change, for any prime $\gp$ of $\Ocal_K[1/\ell]$, $\tilde{f}_{\kappa(\gp)}$ is unramified.
Note that $\Xcal(\Ocal_K[1/\ell])=X(K)$ because $\Xcal$ is proper.
Recall that if a point is $f$-preperiodic, then it becomes periodic after at most $|\Xcal(\kappa(\gp))|$ number of iterations by $f$, see the paragraph after the proof of Theorem~\ref{theorem:singlemap}.
Since there are at most $\deg(f)$ number of points in a preimage of $f$, we can bound the number of preperiodic points of $f$ by
\[
|\Per(X(K),f)|\deg(f)^{|\Xcal(\kappa(f))|}.
\]
If $x\in \mathrm{Fin}(X(K), S)$ then $x\in \mathrm{Fin}(X(K), f)$.
Therefore,
\[
|\mathrm{Fin}(X(K), S)|\leq |\mathrm{Fin}(X(K), f)| \leq |\Per(X(K),f)|\deg(f)^{|\Xcal(\kappa(f))|}.
\]
\end{proof}

\section{Decidability of points with finite orbit: Proof of Theorem \ref{decidable}}
\label{section:decide}
In this section, we assume that the data defining a dynamical system $(X/R, S)$
is given explicitly.
In particular, $R$ is a finitely presented algebra over $\ZZ$ where the generators and relations are specified.
For the scheme $X$ over $R$, we assume that it is given by a finite affine open covering  $\{U_i=\Spec(A_i)\}$ where $A_i$ is explicitly given by a quotient of a polynomial ring with coefficients in $R$ and the relation for the quotient is given explicitly and the gluing morphisms $U_i\cap U_j\to U_i$ are explicitly given by $R$-algebra homomorphisms.
An endomorphism $f\in S$, is given by $f|_{U_{ij}}: U_{ij} \to V_i$
where $\{V_i=\Spec B_i\}_i$ and $\{U_{ij}=\Spec C_{ij}\}_{i,j}$ are finite affine coverings of $X$
such that the $R$-algebra homomorphisms $B_i\to C_{ij}$ corresponding to $f|_{U_{ij}}$ is explicitly given.

\begin{proof}[Proof of Theorem \ref{decidable}]
Let $X$ be an algebraic variety over $\Qbar$ and let $S$ be a finite
collection of unramified endomorphisms of $X$.
Given a point $x\in X(\Qbar)$, by spreading out, we can effectively
determine a finitely presented domain $R$ and a flat model $\Xcal/R$ of $X$ and
explicit $R$-morphisms $\tilde{f}:\Xcal\to\Xcal$ for each $f\in S$ satisfying the following:
\begin{parts}
\Part{(i)} $x$ descends to a point in $X(R)$
\Part{(ii)} $\tilde{f}_\Qbar=f$
\Part{(iii)} $\tilde{f}$ is unramified (by Lemma~\ref{lemma:spreading})
\end{parts}
We shall denote the collection of such $\tilde{f}$ by $\Scal$.
Let $\gm$ be any maximal ideal of $R$.
Since unramifiedness is preserved under base change, $\tilde{f}_{R/\gm}$ is unramified, for all $\tilde{f}\in \Scal$.
By Theorem~\ref{theorem:manymap}, there is an effective constant $C=C(\Xcal, R, \gm)$ such that, if $x$ is $\Scal$-preperiodic then $|\Orbit_\Scal(x)|\leq C$ .
Consider the ascending chain of finite sets $O_1\subseteq O_2 \subseteq \cdots \subseteq O_n \subseteq \cdots$ where $O_1=\{x\}$ and 
\[
O_{n+1}=O_n \cup \{ \tilde{f}(y) : y\in O_n, \ \tilde{f}\in \Scal \}
\]
for $n\geq 1$. Therefore, if $|O_{\lceil C \rceil}|\leq C$ then $x$ is $S$-finite point and otherwise $x$ is not $S$-finite, where ${\lceil C \rceil}$ is the smallest integer greater than or equal to $C$.
\end{proof}

In the following proposition, we show an easier way for effectively computing the bound for eventually fixed orbits of polynomial maps $f:\ZZ^n\to \ZZ^n$ such that $(Sing(f)\cap Fix(f))(\Qbar) = \emptyset$, that is the singular locus and the fixed locus of $f$ do not have any common $\Qbar$-points.

\begin{proposition}
Let $f:\ZZ^n \to \ZZ^n$ be a polynomial map.
Suppose the Jacobian determinant $J(x)$ of $f$ does not vanish on all of fixed $\Qbar$-points of $f$. 
Then there exists an effectively computable prime number $p$ such that
\[
|\Orbit_f(x)|\leq p^n
\]
for all eventually fixed point $x\in \ZZ^n$.
\end{proposition}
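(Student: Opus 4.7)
The plan is to choose a prime $p$ for which $f_{\FF_p}$ has only unramified fixed $\FF_p$-points, and then invoke the argument underlying Theorem~\ref{theorem:singlemap} with $R = \ZZ$, $X = \AA^n_\ZZ$, and $\gm = (p)$; since $|X(R/\gm)| = p^n$, this will deliver exactly the desired bound.

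To produce $p$ effectively, I would use the Nullstellensatz. The hypothesis that $J$ is nonvanishing on the fixed $\Qbar$-locus says that the ideal
\[
I = (f_1 - x_1,\ \ldots,\ f_n - x_n,\ J) \subseteq \ZZ[x_1, \ldots, x_n]
\]
has empty vanishing locus over $\Qbar$, so $1 \in I \cdot \QQ[x]$. Clearing denominators---effectively via an effective Nullstellensatz or a Gr\"obner basis computation over $\QQ$---yields an integer $N \geq 1$ and polynomials $h_0, \ldots, h_n \in \ZZ[x]$ with $N = h_0 J + \sum_{i=1}^{n} h_i (f_i - x_i)$. Taking the smallest prime $p$ not dividing $N$ gives $V(I)(\overline{\FF_p}) = \emptyset$, so at every fixed $\FF_p$-point $y$ of $f_{\FF_p}$ the Jacobian $J(y)$ is invertible; hence $df_y$ is an isomorphism on the Zariski tangent space and $f_{\FF_p}$ is formally unramified at $y$.

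Now let $x \in \ZZ^n$ be an eventually $f$-fixed point and let $\ell \geq 0$ be minimal with $f^\ell(x) = f^{\ell+1}(x)$, so $|\Orbit_f(x)| = \ell + 1$. I claim the reduction map $\{x, f(x), \ldots, f^\ell(x)\} \to \FF_p^n$ is injective, which immediately gives $\ell + 1 \leq p^n$. Suppose instead $f^i(\bar x) = f^j(\bar x)$ for some $0 \leq i < j \leq \ell$. Since $\bar x$ is eventually fixed at $f^\ell(\bar x)$ with no nontrivial cycles, iterating $f^i(\bar x) = f^{i+k(j-i)}(\bar x)$ for $k$ large forces $f^i(\bar x) = f^\ell(\bar x)$, a fixed point of $f_{\FF_p}$; by the previous paragraph $f_{\FF_p}$ is unramified there, and the composition formula for relative differentials at a fixed point gives that $f^{j-i}_{\FF_p}$ is also unramified at $f^i(\bar x)$. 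Applying Proposition~\ref{proposition:key_fixed} to $f^{j-i}$ and the eventually $f^{j-i}$-fixed point $f^i(x)$ then yields $f^i(x) = f^j(x)$, contradicting the minimality of $\ell$.

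The main technical input is the effective Nullstellensatz step, which makes $N$---and hence $p$---genuinely computable from the coefficient data of $f$; everything else is a direct specialization of the argument for Theorem~\ref{theorem:singlemap}, adapted so that the terminal fixed point $f^\ell(x)$ is included alongside the tail in the injectivity statement, sharpening the generic bound $p^n + 1$ to the claimed $p^n$.
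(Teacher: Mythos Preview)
Your proposal is correct and follows essentially the same approach as the paper: use the Nullstellensatz to produce an integer $N$ (and hence a prime $p\nmid N$) at which the Jacobian is invertible on every fixed point modulo $p$, then invoke the lifting argument behind Theorem~\ref{theorem:singlemap} (via Proposition~\ref{proposition:key_fixed}). Your version is in fact slightly more careful than the paper's, which simply cites Theorem~\ref{theorem:singlemap} despite that theorem's hypothesis concerning periodic (not merely fixed) points; you observe that for an eventually fixed orbit any collision mod $p$ forces $f^i(\bar x)$ to equal the terminal fixed point $f^\ell(\bar x)$, so the needed unramifiedness is available there, and your inclusion of $f^\ell(x)$ in the injectivity statement recovers the sharp bound $p^n$ rather than $p^n+C$.
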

\begin{proof}
Let $f=(f_1, \cdots, f_n)$ and let $g_i(t)=f_i(t)-t_i$, for each $i$.
Since $J$ and $g_1, \cdots, g_n$ have no common zeroes, by Nullstellensatz, we have 
\begin{equation}
\label{null}
    h_0(t)J(t)+h_1(t)g_1(t)+\cdots+h_n(t)g_n(t)=k,
\end{equation}
for some $h_i\in \QQ[t_1,\cdots, t_n]$ and nonzero $k\in \QQ$.
By chasing denominators, we can multiply (\ref{null}) by an integer $N$ such that
\[
N(h_0(t)J(t)+h_1(t)g_1(t)+\cdots+h_n(t)g_n(t))=Nk\in \ZZ.
\]
Choose a prime $p$ such that $p\nmid Nk$.
Then for a fixed point $\alpha$ of $f$, we have 
\[
Nh_0(\alpha)J(\alpha)\equiv Nk \not\equiv 0 \mod p.
\]
Thus, the Jacobian determinant does not vanish on the fixed locus modulo $p$.
Therefore, $\bar{f} \bmod p$ has only unramified fixed $\FF_p$-points.
Thus by Theorem~\ref{theorem:singlemap}, we have $|\Orbit_f(x)|\leq p^n$, for all eventually fixed point $x\in \ZZ^n$.
\end{proof}

\end{document}